\newenvironment{E}{\begin{equation}}{\end{equation}}
\def\proof{\noindent{\bf Proof: }}
\def\qed{ \hskip 20pt{\vrule height7pt width6pt depth0pt}\hfil}
\def\forb{{\hbox{forb}}}
\def\forbmax{{\hbox{forbmax}}}

\def\c{{\bf c}}

\def\0{{\bf 0}}
\def\1{{\bf 1}}

\def\Av{{\mathrm{Avoid}}}

\newcommand{\ncols}[1]{\| #1 \|}
\newcommand{\rf}[1]{(\ref{#1})}
\newcommand{\trf}[1]{Theorem~\ref{#1}}
\newcommand{\lrf}[1]{Lemma~\ref{#1}}

\newcommand{\srf}[1]{Section~\ref{#1}}

\documentclass[12pt]{article}
\setlength{\textwidth}{157mm}
\setlength{\textheight}{217mm}
\setlength{\evensidemargin}{0pt}
\setlength{\oddsidemargin}{0pt}
\setlength{\topmargin}{0pt}
\newtheorem{thm}{Theorem}[section]
\newtheorem{lemma}[thm]{Lemma}
\newtheorem{prop}[thm]{Proposition}
\newtheorem{cor}[thm]{Corollary}

\newtheorem{remark}[thm]{Remark}

\usepackage{amssymb}
\usepackage{amsmath}
\title{ Unavoidable Multicoloured Families of Configurations}
\author{R.P. Anstee\thanks{Research supported in part by
NSERC, some work done while visiting the second author at USC.} 
\\Mathematics Department\\The University of British Columbia\\Vancouver,
B.C. Canada V6T 1Z2\\ \\
Linyuan Lu
\thanks{Research supported in part by NSF
grant DMS 1300547 and ONR grant N00014-13-1-0717.}
 \\Mathematics Department\\The University of South Carolina\\Columbia, SC, USA  \\
\\\mbox{\ }}

\begin{document}
\maketitle
\begin{abstract}
Balogh and Bollob\'as [{\em Combinatorica 25, 2005}] prove that for
any $k$ there is a constant $f(k)$ such that any set system with
at least $f(k)$ sets reduces to a $k$-star, an $k$-costar or an
$k$-chain. They proved $f(k)<(2k)^{2^k}$. Here we improve it to
$f(k)<2^{ck^2}$ for some constant $c>0$.

This is a special case of the following result on the multi-coloured 
forbidden configurations at $2$ colours. 
Let $r$ be given. Then there exists a
constant $c_r$ so that a matrix with entries drawn from
$\{0,1,\ldots ,r-1\}$ with at least $2^{c_rk^2}$ different columns will have a
$k\times k$ submatrix that can have its rows and columns permuted so
that in the resulting matrix will be either $I_k(a,b)$ or $T_k(a,b)$
(for some $a\ne b\in \{0,1,\ldots, r-1\}$), where
$I_k(a,b)$ is the $k\times k$ matrix with $a$'s on the diagonal and $b$'s
else where, $T_k(a,b)$ the $k\times k$ matrix with $a$'s below the
diagonal and $b$'s elsewhere. 
 We also extend to considering the bound on the number of distinct
  columns, given that the number of rows is $m$, when avoiding a
  $t k\times k$ matrix obtained by taking any one of the $k \times
  k$ matrices above and repeating each column $t$ times.  We use
  Ramsey Theory.

\vskip 10 pt
Keywords: extremal set theory, extremal hypergraphs,  forbidden configurations, Ramsey theory, trace.
\end{abstract}

\section{Introduction}

 We define 
 a matrix to be \emph{simple} if it has no repeated columns. A (0,1)-matrix that is simple is the matrix analogue of a set system (or simple hypergraph)
 thinking of the matrix as the element-set incidence matrix.  
 We generalize to allow more entries in our matrices and define
 an $r$-\emph{matrix} be a matrix whose entries are in $\{0,1,\ldots, r-1\}$. We can think of this as an $r$-coloured matrix. We examine extremal problems and let $\ncols{A}$ denote the number of columns in $A$.

We will use the language of matrices in this paper rather than sets. For two  matrices $F$ and $A$, define $F$ to be a \emph{configuration} in $A$, and write $F \prec A$, if there is a row and column permutation of $F$ which is a submatrix of $A$.  Let ${\cal F}$ denote a finite set of matrices. Let 
$$\Av(m,r,{\cal F})=\left\{A\,:\,A\hbox{ is }m\hbox{-rowed and simple }r\hbox{-matrix}, F\nprec A\hbox{ for }F\in{\cal F}\right\}.$$
Our extremal function of interest is
$$\forb(m,r,{\cal F})=\max_A\{\ncols{A}\,:\,A\in\Av(m,r,{\cal F})\}.$$
We use the simplified notation $\Av(m,{\cal F})$ and $\forb(m,{\cal F})$ for $r=2$.
 Many results of forbidden configurations are cases with $r=2$ (and hence $(0,1)-$matrices) and with $|{\cal F}|=1$. There is a survey \cite{survey} on forbidden configurations.  There are a number of results for general $r$ including a recent general shattering result \cite{FS}  which has references to earlier work. This paper explores some forbidden families with constant or linear bounds. We do not require any $F\in{\cal F}$ to be simple which is quite different from usual forbidden subhypergraph problems.

A lovely result of Balogh and Bollob\'as on set systems can be
restated in terms of  a forbidden family.  
Let $I_{\ell}$ denote the $\ell\times\ell$ identity matrix, $I_{\ell}^c$ denote the (0,1)-complement of $I_{\ell}$  and let $T_{\ell}$ be the $\ell\times\ell$ (upper) triangular matrix with a 1 in position $(i,j)$
if and only if $i\le j$.  As a configuration, the   ${\ell}\times{\ell}$ lower triangular matrix with 1's on the diagonal is the same as $T_{\ell}$.
The result \cite{BaBo} show that after a constant number of distinct columns, one cannot avoid all three configurations $I_{\ell},I_{\ell}^c,T_{\ell}$.

\begin{thm}\cite{BaBo} For any $\ell\geq 2$,
  $\forb(m,\{I_{\ell},I_{\ell}^c,T_{\ell}\})\le (2\ell)^{2^\ell}$. 
\label{BB}\end{thm}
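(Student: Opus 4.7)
The plan is to apply the multi-colour Ramsey theorem to the columns of $A$, classifying each pair by the inclusion relationship of their $1$-sets. Let $A\in\Av(m,\{I_\l,I_\l^c,T_\l\})$ have $N$ columns $\c_1,\ldots,\c_N$, regarded as subsets of the row set $[m]$. For each pair $\{i,j\}$ with $i<j$, I colour the edge \emph{red} if $\c_i\subsetneq\c_j$, \emph{blue} if $\c_i\supsetneq\c_j$, and \emph{green} if $\c_i$ and $\c_j$ are incomparable. Once $N\ge(2\l)^{2^\l}$, a suitable choice of Ramsey parameter yields a monochromatic clique of whatever size $L=L(\l)$ is needed for the subsequent extractions.

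A red clique of size $\l+1$ gives a chain $\c_{i_0}\subsetneq\c_{i_1}\subsetneq\cdots\subsetneq\c_{i_\l}$. Picking a witness row $r_k\in\c_{i_k}\setminus\c_{i_{k-1}}$ for each $k=1,\ldots,\l$, the $\l\times\l$ submatrix on rows $r_1,\ldots,r_\l$ and columns $\c_{i_1},\ldots,\c_{i_\l}$ has entry $1$ in position $(k,j)$ iff $r_k\in\c_{i_j}$ iff $j\ge k$, which is $T_\l$ after permutation. A blue clique is handled symmetrically, since the reverse chain produces a row/column permutation of the same $T_\l$.

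The main obstacle is a large green clique, because extracting $I_\l$ or $I_\l^c$ from $L$ pairwise incomparable columns requires more than incomparability --- $I_\l$ asks for a private witness row for each chosen column that is $0$ on all the others. My plan is a second Ramsey reduction: for each ordered pair of incomparable columns, fix canonical witness rows where one column is $1$ and the other is $0$, and then colour ordered pairs (or triples) of columns by the local pattern these witnesses induce on further columns. Iterating this refinement drives the family to a subfamily regular enough that each chosen column contributes a private row where only it is $1$ (yielding $I_\l$) or only it is $0$ (yielding $I_\l^c$). The doubly-exponential bound $(2\l)^{2^\l}$ emerges naturally from nesting these Ramsey applications, with each refinement stage contributing an exponential blow-up in the required clique size; the chain cases, by contrast, enjoy considerable slack, so the green-case analysis is what ultimately dictates the bound.
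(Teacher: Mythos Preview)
The paper does not supply its own proof of this statement: \trf{BB} is quoted from \cite{BaBo} as a known result, and the paper's contribution is the improved bound of \trf{main}, proved by an entirely different mechanism (the row-by-row inductive decomposition of \srf{decomp}, yielding a tree whose depth is bounded via \lrf{depth} and whose branching is bounded via the Ramsey argument of \lrf{ulemma}). So there is no in-paper proof of \trf{BB} to compare against directly; what can be compared is your outline versus the paper's method for the stronger theorem, and they share nothing beyond an appeal to Ramsey's theorem at some stage.

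On the substance of your proposal: the red and blue cases are fine and standard --- a monochromatic chain of length $\l+1$ does yield $T_\l$ exactly as you describe. The gap is the green case, and you have essentially acknowledged it rather than closed it. Saying ``fix canonical witness rows \ldots\ colour ordered pairs (or triples) by the local pattern \ldots\ iterating this refinement drives the family to a subfamily regular enough'' is a description of a hope, not a proof. You have not specified which patterns you colour by, how many colours arise, what clique size you request at each stage, or why the output of the iteration actually contains $I_\l$ or $I_\l^c$. In particular, an arbitrarily large antichain need not contain $I_\l$ on its own (take $\l$ disjoint pairs and all sets choosing one element from each pair: this is an antichain of size $2^\l$ with no $I_3$), so the complementary configuration $I_\l^c$ must genuinely enter the argument, and your sketch gives no indication of where the dichotomy between $I_\l$ and $I_\l^c$ is forced. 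Nor is it clear that ``nesting these Ramsey applications'' stops at a double exponential rather than climbing a tower; you assert that $(2\l)^{2^\l}$ ``emerges naturally'' but provide no calculation. Until the antichain case is written out with explicit colourings, explicit Ramsey parameters, and an explicit extraction of $I_\l$ or $I_\l^c$, this is a plan and not a proof.
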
 

A slightly worse bound  $\forb(m,\{I_{\ell},I_{\ell}^c,T_{\ell}\}) \le
2^{2^{2\ell}}$ but with a simpler proof is in \cite{BKS}.
We generalize $I_{\ell},I^c_{\ell},T_{\ell}$ into $r$-matrices as
follows. Define the generalized identity matrix
 $I_{\ell}(a,b)$ as the $\ell\times\ell$ $\quad
r$-matrix with $a$'s on the diagonal and $b$'s elsewhere.
Define the generalized triangular matrix $T_{\ell}(a,b)$ as the $\ell\times\ell$ $\quad
r$-matrix with $a$'s below the diagonal and $b$'s elsewhere. Now we
have $I_\ell=I_\ell(1,0)$, $I_\ell^c=I_\ell(0,1)$, and
$T_\ell=T_\ell(1,0)$.

Let
$${\cal T}_{\ell}(r)=\left\{ I_{\ell}(a,b)\,:\,a,b\in
  \{0,1,\cdots, r-1\}, a\ne b \right\}\cup$$
$$ \left\{ T_{\ell}(a,b)\,:\,a,b\in
  \{0,1,\cdots, r-1\}, a\ne b \right\}.
$$

Note that $I_{\ell}=I_{\ell}(1,0)$, $I_{\ell}^c=I_{\ell}(0,1)$,
$T_{\ell}=T_{\ell}(0,1)$,  and $T_{\ell}^c=T_{\ell}(1,0)$.
In \trf{BB}, we do not have $T_{\ell}^c=T_{\ell}(1,0)$ but we note that 
$T_{\ell-1}(0,1)\prec T_{\ell}(1,0)\prec T_{\ell+1}(0,1)$. 
From the point of view of forbidden configurations in the context of
\trf{BB}, $T_{\ell}(1,0)$ and $T_{\ell}(0,1)$ are much the same.  
In general, we have $T_{\ell-1}(a,b)\prec T_{\ell}(b,a)\prec
T_{\ell+1}(a,b)$.

After removing all $T_{\ell}(a,b)$ with $a>b$, we define a reduced
set of configurations:
$${\cal T}'_{\ell}(r)=\left\{ I_{\ell}(a,b)\,:\,a,b\in
  \{0,1,\cdots, r-1\}, a\ne b \right\}
\cup$$
$$ \left\{ T_{\ell}(a,b)\,:\,a,b\in
  \{0,1,\cdots, r-1\}, a< b \right\}.
$$

We have $|{\cal T}'_{\ell}(r)|=3\binom{r}{2}$.
In particular, ${\cal T}'_{\ell}(2)=\{I_{\ell},I_{\ell}^c,T_{\ell}\}$
and
$$\forb(m,r,{\cal T}_{\ell}(r))\leq 
\forb(m,r,{\cal T}'_{\ell}(r))\leq 
\forb(m,r,{\cal T}_{\ell+1}(r)).$$

As a forbidden family, ${\cal T}'_{\ell}(r)$ behaves very much like
${\cal T}_{\ell}(r)$. We will mainly work on ${\cal T}_{\ell}(r)$.
\begin{thm} \label{main} \label{generalizeBB} 
Let $r\geq 2$ be given. Then there exists a constant $c_r$
 so that for any $\ell\geq 1$, 
$$\forb(m,r,{\cal T}_{\ell}(r)) \le
2^{c_r\ell^2}.$$ 
In particular, for $r=2$, we have
$$\forb(m,\{I_{\ell}, I^c_\ell, T_\ell\}) \le
 2^{c_2(\ell^2+\ell)}$$ with the constant $c_2\leq6 \log_2 6<15.51$.
\end{thm}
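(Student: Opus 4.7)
The plan is to prove \trf{generalizeBB} via Ramsey's theorem on pairs of columns, followed by a structural (Cartesian-tree) analysis of the resulting monochromatic clique. Given the simple $r$-matrix $A$ with $N$ columns, I would fix an arbitrary ordering $\c_1,\ldots,\c_N$ and, for each pair $\{i,j\}$ with $i<j$, colour the pair by the ordered pair of values $(\c_i[\rho(i,j)],\c_j[\rho(i,j)])$, where $\rho(i,j)$ is the smallest row on which $\c_i$ and $\c_j$ disagree. There are $r(r-1)$ colours, so a multicolour Ramsey application yields $L$ columns — relabel them $\v_1,\ldots,\v_L$ in the induced order — such that every pair shares a common colour $(a,b)$; i.e.\ for every $p<q$ the first disagreement row $R_{pq}$ of $\v_p,\v_q$ has $\v_p[R_{pq}]=a$ and $\v_q[R_{pq}]=b$.

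The next step is a structural analysis of the $R_{pq}$. A direct case analysis on triples $p<q<s$ shows that exactly one of $R_{pq}=R_{ps}<R_{qs}$ or $R_{ps}=R_{qs}<R_{pq}$ can occur, and that at the doubled minimum row the columns split into a prefix block taking value $a$ followed by a suffix block taking value $b$. Equivalently, the clique organises itself as the leaves of a binary Cartesian tree whose internal nodes carry row labels: at every internal node the left subtree consists of those columns with value $a$ at that row and the right subtree of those with value $b$.

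From this tree I would extract a configuration in $\cT_\ell(r)$. Every binary tree with many leaves admits a long root-to-leaf path, and at each internal node on such a path there is an off-path subtree from which a representative column can be chosen. A pigeonhole over whether each on-path step goes to the $a$-side or to the $b$-side refines to a sub-path along which all on-path moves have the same direction; together with the associated off-path representatives, this yields an $\ell\times\ell$ submatrix with $a$'s on the diagonal, $b$'s above, and below-diagonal entries that are a priori free. A final Ramsey application, colouring pairs by the value in those free positions, forces the below-diagonal entries to a common constant $c$. If $c=b$ the resulting submatrix is $I_\ell(a,b)$; if $c=a$ it is $T_\ell(b,a)$ after reversing the order of rows and columns — and both configurations lie in $\cT_\ell(r)$. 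For $r=2$ only values in $\{a,b\}$ can occur, which is why the argument closes cleanly; for $r\ge 3$ the third-value case $c\notin\{a,b\}$ must be ruled out either by enriching the initial colour palette or by one further iteration.

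The principal obstacle is achieving the exponent $c_r\ell^2$ rather than a tower bound. A naive pipeline (Ramsey, then tree-depth extraction, then Ramsey again) gives only $\ell=\Theta(\log\log N)$, since each layer loses a logarithm, and hence a doubly exponential estimate on $N$. To recover $\ell=\Theta(\sqrt{\log N})$ the three steps must be consolidated into a single combined Ramsey bookkeeping in which a pair-colouring with polynomially-in-$r$ many colours (at most $6$ colours for $r=2$, consistent with $R(\ell;6)\le 6^{6\ell}$) controls the entire extraction simultaneously. The numerical value $c_2\le 6\log_2 6$ corresponds precisely to such a $6$-colour Ramsey estimate, and the additional linear $\ell$ in the bound $2^{c_2(\ell^2+\ell)}$ reflects the extra small-$\ell$ bookkeeping needed when specialising $\cT_\ell(2)$ to the original family $\{I_\ell,I_\ell^c,T_\ell\}$ of \trf{BB}.
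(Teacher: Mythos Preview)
Your structural observation is sound: after one Ramsey pass on the colour $(\c_i[\rho(i,j)],\c_j[\rho(i,j)])$ of first-disagreement rows, the surviving columns do arrange themselves as leaves of a binary Cartesian tree, and your dichotomy on triples is correct. The proposal, however, does not prove the stated bound. You acknowledge the quantitative obstacle but do not resolve it, and in fact your pipeline is weaker than you estimate: from $N$ columns the first Ramsey yields $L=\Theta(\log N)$; a binary tree with $L$ leaves is only guaranteed a root-to-leaf path of length $\log_2 L$; and the final Ramsey on the free below-diagonal positions costs another logarithm. The net is $\ell=\Theta(\log\log\log N)$, a triple exponential in $\ell$. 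No consolidation of the two Ramsey steps into one palette can repair this, because the loss at the tree-depth step is intrinsic to your construction: your Cartesian tree is genuinely binary, so in the balanced case its depth is only $\log_2 L$.

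The paper avoids this loss by building a different tree. It decomposes $A$ row by row, writing $[G_0|\cdots|G_{r-1}]$ as $[C\,C\,A']$ with $A'$ still simple, and recursing on $A'$; the nodes of the resulting tree are the matrices $C$. The crucial point (\lrf{depth}) is that the depth of this tree is at most $\binom{r}{2}(\ell-1)+1$ \emph{directly}, by pigeonhole: a depth-$d$ chain produces $2^d$ extensions of a single column, and once $\ell$ of the $d$ levels share a common pair $\{a,b\}$ one already has the full $\ell\times 2^\ell$ $(a,b)$-matrix and hence $I_\ell(a,b)$. A separate Ramsey argument (\lrf{ulemma}, with $2r^2-r$ colours, six when $r=2$) bounds the branching at each node by $u<6^{6(\ell-1)}$ for $r=2$. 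The recursion then gives $\ncols{A}\le (2u)^{\mathrm{depth}}\cdot O(1)\le 6^{6\ell^2}$: the $\ell^2$ exponent is depth $O(\ell)$ times $\log u=O(\ell)$, and the constant $c_2=6\log_2 6$ records $u^{\ell}$, not a single six-colour Ramsey on column pairs as you surmise. What your approach lacks is precisely a direct $O(\ell)$ bound on tree depth.
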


The previous best known upper bound for $\forb(m,\{I_{\ell},
I^c_\ell, T_\ell\})$ (the case $r=2$) was doubly exponential in $\ell$, so this is a substantial improvement.
The current best known
 lower bound for 
$\forb(m,\{I_{\ell}, I^c_\ell, T_\ell\})$ is still ${\ell}^{c_1\ell}$, and it was
conjectured $\forb(m,\{I_{\ell}, I^c_\ell, T_\ell\})$ $<{\ell}^{c_2\ell}$ 
in \cite{BaBo}. We are  not quite there yet.
The result gives a concrete value for $c_2$ but it is not likely to be best possible.  Currently, for general $r\ge 2$, 
$c_r<30\binom{r}{2}^2\log_2r$. \trf{BB} is also generalized to $r$-colours.

 \trf{BB} yields a  corollary, as noted in \cite{AKoch}, identifying which families yield a constant bound and remarking that all other families yield a linear bound.  Our multicoloured extension of \trf{BB} also yields a similarly corollary.

\begin{cor} Let ${\cal F}=\{F_1,F_2,\ldots ,F_p\}$ and $r$ be given. There are two possibilities. Either $\forb(m,r,{\cal F})$ is $\Omega(m)$ or there exists an $\ell$ and a function 
$f:\, (i,j)\longrightarrow [p]$ defined on all pairs $i,j$ with
$i,j\in\{0,1,\ldots, r-1\}$ and $i\ne j$ so that either
$F_{f(i,j)}\prec I_{\ell}(i,j)$ or $F_{f(i,j)}\prec T_{\ell}(i,j)$,
 in which case  there is a constant $c_{\ell,r}$ with $\forb(m,r,{\cal F})= c_{\ell,r}$.\label{classify}\end{cor}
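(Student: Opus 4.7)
The plan is to establish the dichotomy in two directions. The guiding observation is that the covering condition in the statement should be read so that \emph{every} matrix in $\cT_{\ell}(r)$ has some member of $\cF$ as a configuration; then $\Av(m,r,\cF) \subseteq \Av(m,r,\cT_\ell(r))$ follows by transitivity of $\prec$, and the constant bound is essentially inherited from \trf{main}.

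For the forward direction (existence of $f$ implies a constant bound), I would fix any $A \in \Av(m,r,\cF)$. For each ordered pair $(i,j)$ with $i \ne j$ and each type $X \in \{I,T\}$, the hypothesis supplies some $F_s \in \cF$ with $F_s \prec X_\ell(i,j)$. Since $A$ avoids $F_s$ and $\prec$ is transitive, $A$ must avoid the host $X_\ell(i,j)$ as well. Hence $A \in \Av(m,r,\cT_\ell(r))$, and \trf{main} gives $\ncols{A} \leq 2^{c_r \ell^2}$, a constant $c_{\ell,r}$ depending only on $\ell$ and $r$.

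For the reverse direction, I would assume the covering condition fails for every $\ell$. Then for each $\ell$ some triple $(i,j,X)$ with $i \ne j$ and $X \in \{I,T\}$ admits no $F_s \prec X_\ell(i,j)$. Only $2r(r-1)$ such triples exist, so by the pigeonhole principle a fixed triple $(i_0,j_0,X_0)$ recurs for infinitely many $\ell$. The key monotonicity is that $F_s \prec X_\ell(i_0,j_0)$ is preserved when $\ell$ grows (since $X_\ell(i_0,j_0) \prec X_{\ell+1}(i_0,j_0)$), so its negation is antimonotone; combined with the infinite recurrence, this upgrades to: $(i_0,j_0,X_0)$ is uncovered for \emph{every} $\ell$. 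Taking the $m\times m$ matrix $I_m(i_0,j_0)$ or $T_m(i_0,j_0)$ (depending on $X_0$) yields a simple matrix in $\Av(m,r,\cF)$ with $m$ distinct columns, so $\forb(m,r,\cF) \ge m$.

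The main obstacle is the pigeonhole-plus-antimonotonicity upgrade in the reverse direction, producing a single uniform bad triple from the $\ell$-dependent ones. The forward direction is then essentially immediate from \trf{main} via transitivity, once the covering condition is read as asserting that $\cF$ refines $\cT_\ell(r)$.
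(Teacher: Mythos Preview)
Your argument is correct. Both the forward direction (via transitivity and \trf{main}) and the reverse direction (pigeonhole over the finitely many triples $(i,j,X)$, then antimonotonicity of ``uncovered'' to pass from infinitely many $\ell$ to all $\ell$) are sound. You are also right that the covering condition must be read in the strong form---some $F_s$ sits inside \emph{each} of $I_\ell(i,j)$ and $T_\ell(i,j)$---since otherwise the constant-bound conclusion would fail (e.g.\ $\cF=\{I_3,I_3^c\}$ satisfies the literal ``or'' reading for $r=2$, yet $T_m$ avoids both, giving $\forb(m,\cF)\ge m$).

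The paper's proof takes a shorter route in the reverse direction. Rather than pigeonholing over $\ell$, it fixes the single threshold $d=\max_h(a_h+b_h)$ and observes that every $a_h\times b_h$ submatrix of $I_m(i,j)$ or $T_m(i,j)$ already occurs in $I_d(i,j)$ or $T_d(i,j)$ respectively; hence $F_h\nprec X_d(i,j)$ forces $F_h\nprec X_m(i,j)$ for all $m\ge d$, and the construction $X_m(i,j)$ gives $\Omega(m)$ immediately. This buys an explicit value of $\ell$ (namely $d$) and avoids the compactness-flavoured step altogether. Your pigeonhole argument is a legitimate alternative that does not need this structural fact about submatrices of $I_m$ and $T_m$, at the cost of being less constructive. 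The paper's written proof, incidentally, only spells out this reverse direction; the forward direction you give (transitivity plus \trf{main}) is left implicit there.
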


\proof  Let $F_h$ be $a_h\times {b}_h$ and let $d=\max_{h\in
  [p]}(a_h+{b}_h)$.  Then $F_h\nprec I_{d}(i,j)$  (respectively $F_h\nprec T_{d}(i,j)$)
implies 
$F_h\not\prec I_{m}(i,j)$ (respectively $F_h\nprec T_{m}(i,j)$)
for any $m\ge {d}$. Thus if  for some choice $\,i,j\in \{0,1,\cdots,
r-1\}$ with $i\ne j$ we have $F_h\nprec I_{\ell}(i,j)$  or
$F_h\nprec T_{\ell}(i,j)$ for all $h\in [p]$, then $\forb(m,{\cal
  F})$ is $\Omega(m)$ using the construction  $I_{m}(i,j)$ or $T_{m}(i,j)$. 
\qed
\vskip 10pt
Some further applications of our results and proof ideas are in \srf{applications}.

Our forbidden configurations $I_{\ell}(a,b)$ and $T_{\ell}(a,b)$
 for $a\ne b$  are simple but it is natural to consider non-simple
matrices as forbidden configurations. One natural way to create
non-simple matrices is as follows. For $t>1$, let $t\cdot
M=[M|M|\cdots |M]$, the concatenation of $t$ copies of $M$. For a
family $\cal F$ of matrices, we define $t\cdot {\cal F}=\{t\cdot
M\colon M\in {\cal F}\}$.  In \cite{ALu}, we showed that
$\forb(m,\{t\cdot I_k,t\cdot I_k^c,t\cdot T_k\})$ is $O(m)$.  We
obtain a sharp bound and extend to $r$-matrices.

\begin{thm} \label{generalizetBB}
 Let $\ell\geq 2$, $r\geq 2$, and $t\geq 1$ be given. 
Then there is a constant $c$ with
$$\forb(m,r,t\cdot {\cal T}_{\ell}(r))\leq 2 r(r-1)(t-1)m
  +c.$$
\end{thm}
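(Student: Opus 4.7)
The plan is to induct on $m$, with small $m$ absorbed into the additive constant $c$. For the inductive step, fix row $1$ of $A \in \Av(m, r, t \cdot \cT_\ell(r))$ and split $A = [A_0\mid A_1\mid\cdots\mid A_{r-1}]$ according to the value in row $1$. Let $B_a$ be $A_a$ with row $1$ deleted (a simple $(m-1)$-rowed $r$-matrix) and let $A'$ be the simple $(m-1)$-rowed matrix whose column set is the union $\bigcup_a \text{cols}(B_a)$. For each column $c$ of $A'$ set $D_c = \{a : c \in B_a\}$. Then $\|A\| = \sum_c |D_c|$ and $\|A'\| = \sum_c 1$, giving $\|A\|-\|A'\| = \sum_c(|D_c|-1)$. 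Since any configuration $F\prec A'$ satisfies $F\prec A$, the matrix $A'$ inherits avoidance of $t\cdot\cT_\ell(r)$, so the inductive hypothesis yields $\|A'\| \le 2r(r-1)(t-1)(m-1) + c$. The induction closes once we establish
$$\sum_c(|D_c|-1) \le 2r(r-1)(t-1) + O(1),$$
the $O(1)$ being absorbed into the constant $c$.

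To bound this excess, for each unordered pair $\{a,b\}$ of distinct values I would set $M_{\{a,b\}} = \{c \in \text{cols}(A') : \{a,b\} \subseteq D_c\}$. Using $|D_c|-1 \le \binom{|D_c|}{2}$ gives $\sum_c(|D_c|-1) \le \sum_{\{a,b\}}|M_{\{a,b\}}|$, so it suffices to bound each $|M_{\{a,b\}}|$ by $4(t-1) + O(1)$: summing over the $\binom{r}{2}$ pairs then produces exactly the target coefficient $2r(r-1)(t-1)$. Treating $M_{\{a,b\}}$ as a simple $(m-1)$-rowed $r$-matrix, if its cardinality exceeds the constant $\forb(m-1,r,\cT_L(r)) \le 2^{c_r L^2}$ supplied by \trf{main} for a sufficiently large $L = L(r,t,\ell)$ (to be chosen via Ramsey), then $M_{\{a,b\}}$ contains a copy of some $I_L(a',b')$ or $T_L(a',b')$.

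Turning such a large configuration into $t$ disjoint copies of an $\ell$-sized configuration completes the argument: each of the $L$ columns of the configuration inside $M_{\{a,b\}}$ has two lifts to $A$ (agreeing outside row $1$, but with values $a$ and $b$ there), producing a doubled $I_L(a',b')$ or $T_L(a',b')$ on the rows $\{1\}\cup R$, where $R$ is the $L$-set of rows carrying the configuration. A Ramsey-theoretic argument---selecting $\ell$ rows from $R$ and $t\ell$ of the $2L$ doubled columns so that the row-$1$ labels together with the $I_L$ or $T_L$ pattern combine to realise $t$ disjoint copies of some $I_\ell(x,y)$ or $T_\ell(x,y)$---contradicts $A$'s avoidance of $t\cdot\cT_\ell(r)$, forcing $|M_{\{a,b\}}| \le 4(t-1) + O(1)$. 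The main obstacle is precisely this last extraction: the row-$1$ doubling yields only two copies for free, so producing $t \ge 3$ copies requires $L$ large enough (an iterated Ramsey applied to $\binom{R}{\ell}$, split over the cases $(a',b')=(a,b)$, $(a',b')=(b,a)$, or neither) and careful case analysis so that the per-pair bound remains $4(t-1) + O(1)$ rather than a larger constant that would spoil the sharp coefficient $2r(r-1)(t-1)$.
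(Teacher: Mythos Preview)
Your induction has a genuine gap: the per-step inequality $\sum_c(|D_c|-1)\le 2r(r-1)(t-1)+O(1)$ is false, and in any case an additive $O(1)$ incurred at each of the $m$ inductive steps contributes $O(m)$ to the final bound, not a constant, so it cannot be ``absorbed into $c$''. Here is a concrete counterexample to the per-step bound. Take $r=2$, $t=3$, $\ell=3$, and let $A$ be the simple $m\times 2(m-1)$ matrix whose first row is $[\,1^{m-1}\,0^{m-1}\,]$ and whose remaining $m-1$ rows carry $[\,I_{m-1}\mid I_{m-1}\,]$. Every column of $A$ has at most two $1$'s, which already rules out $T_3$ and $I_3^c$; a short case split on whether row $1$ is among the three chosen rows shows that $3\cdot I_3$ and $3\cdot T_3(1,0)$ also fail (at most two columns with two $1$'s are ever available), so $A\in\Av(m,2,3\cdot\cT_3(2))$. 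Yet deleting row $1$ collapses the two halves: $B_0=B_1=I_{m-1}$, whence $M_{\{0,1\}}=I_{m-1}$ and $|M_{\{0,1\}}|=m-1$, not $4(t-1)+O(1)$. Your extraction step cannot rescue this, because the doubled block you obtain---$[\,I_L\mid I_L\,]$ together with the constant row-$1$ pattern $[\,1^L\,0^L\,]$---is exactly a submatrix of this same $A$, and we have just seen it contains no $3\cdot F$ for any $F\in\cT_3(2)$. Doubling gives two copies for free; there is no mechanism here to manufacture a third, no matter how large $L$ is.

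The paper sidesteps this by running the row-by-row decomposition of \srf{decomp} with multiplicity parameter $s=t-1$ rather than $s=1$. A column that lands in $C_i$ then already has multiplicity at least $s+1=t$ in the current matrix, so any $F\in\cT_\ell(r)$ found among the \emph{distinct} columns of $C_i$ lifts immediately to $t\cdot F\prec A$; \trf{main} therefore caps the number of distinct columns of each $C_i$ by a constant. A separate Ramsey argument (\lrf{vbd}) shows that at most a bounded number $v$ of the $C_i$ satisfy $\ncols{C_i}\ge T:=r(r-1)(t-1)+1$; the remaining $C_i$ contribute at most $T-1$ each, and $2\sum_i\ncols{C_i}\le 2(T-1)m+O(1)$ yields the sharp coefficient $2r(r-1)(t-1)$. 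The idea you are missing is that the multiplicity-$t$ requirement should be built into the decomposition itself (via $s=t-1$), not recovered after the fact from a single doubled row.
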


The proof of the upper bound is in \srf{linear}. For a lower bound consider a choice $a,b\in\{0,1,\ldots ,r-1\}$ with $a\ne b$.  Consider $T_{\ell}(a,b)$.  The first column has (at least) one $b$ and at least one $a$ and the rest either $a$ or $b$. The following easy result is useful.

\begin{thm}\cite{AFS} $\forb(m,t\cdot [0])=\lfloor \frac{tm}{2}\rfloor +1$. \label{row}\end{thm}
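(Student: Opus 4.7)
The plan is to count zero entries carefully. A simple $m$-rowed $(0,1)$-matrix $A$ avoids the configuration $t\cdot[0]$ exactly when no row of $A$ has $t$ zero entries, that is, every row has at most $t-1$ zeros; summing across rows, the total number of zero entries of $A$ is at most $m(t-1)$. The target formula $\lfloor tm/2\rfloor+1$ then suggests the extremal matrix spends this zero budget greedily on low-weight columns.

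For the upper bound I would partition the columns of $A$ by number of zeros. Let $c_0$, $c_1$, $c_{\ge 2}$ denote the numbers of columns having $0$, exactly $1$, or at least $2$ zero entries respectively. Simplicity forces $c_0\le 1$ and $c_1\le m$, since there is only one all-ones column and only $m$ distinct columns of weight one. Counting zeros column-by-column gives $c_1+2c_{\ge 2}\le m(t-1)$, so
\[
\ncols{A}=c_0+c_1+c_{\ge 2}\le 1+c_1+\frac{m(t-1)-c_1}{2}=1+\frac{c_1+m(t-1)}{2}\le 1+\frac{mt}{2},
\]
the last inequality using $c_1\le m$. Integrality of $\ncols{A}$ then gives $\ncols{A}\le \lfloor tm/2\rfloor+1$.

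For a matching lower bound I would build $A$ explicitly: take the all-ones column, all $m$ weight-one columns, and $\lfloor m(t-2)/2\rfloor$ weight-two columns whose pairs of zero-positions form the edge set of a graph $G$ on $[m]$ of maximum degree $t-2$. These columns are pairwise distinct. Each row then acquires one zero from the weight-one columns and at most $\deg_G(i)\le t-2$ further zeros from the weight-two columns, for at most $t-1$ zeros in total, so $A$ avoids $t\cdot[0]$. A short parity check shows $1+m+\lfloor m(t-2)/2\rfloor=\lfloor tm/2\rfloor+1$ in all cases.

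The only step really requiring checking is the existence of $G$ with exactly $\lfloor m(t-2)/2\rfloor$ edges and maximum degree $t-2$; this is routine for $m\ge t-1$ (take a $(t-2)$-regular graph when $m(t-2)$ is even, or replace one vertex's degree by $t-3$ otherwise), which is the parameter range that matters in applications of \trf{row}.
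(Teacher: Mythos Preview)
The paper does not prove \trf{row}; it is quoted from \cite{AFS} and used only as a black box in the lower-bound construction following \trf{generalizetBB}. So there is no ``paper's own proof'' to compare against, and your task reduces to whether the argument stands on its own.

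Your argument is correct. The upper bound is a clean double count: bounding the total number of zeros by $m(t-1)$ rowwise, then spending that budget optimally on columns with $0$, $1$, or $\ge 2$ zeros, using simplicity only through $c_0\le 1$ and $c_1\le m$. The lower bound construction is also fine, and the parity identity $1+m+\lfloor m(t-2)/2\rfloor=\lfloor mt/2\rfloor+1$ holds since $m$ is an integer. Two small remarks. First, your use of ``weight'' to mean the number of \emph{zeros} in a column is the reverse of the usual convention (column sum); the argument is internally consistent, but you should say so explicitly to avoid confusing a reader. Second, you are right that the construction needs $m\ge t-1$ (and implicitly $t\ge 2$): for $m<t-1$ the stated equality can fail outright (e.g.\ $m=1$, $t=4$ gives $\forb=2<3$), so the theorem as quoted should be read with $m$ large, which is indeed the only regime in which the paper invokes it.
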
 

Given a pair $a,b$ we construct $M_m(a,b)$ as follows. Choose $e$ from $\{a,b\}$. Form an $m\times (\lfloor \frac{tm}{2}\rfloor +1)$ matrix $M_m(a,b)$ all of whose entries are $a$ or $b$ with $t\cdot [e]\nprec  M_m(a,b)$ using \trf{row}. 
Now consider the concatenation $M$ of the $\binom{r}{2}$ matrices $M_{m}(a,b)$ over all choices for $a,b$.  Thus $M$ is an $m\times (\binom{r}{2}(\lfloor \frac{tm}{2}\rfloor +1))$ simple $r$-matrix. Moreover  $t\cdot T_{\ell}^{a,b,d}\nprec M$ since the first column of $T_{\ell}(a,b)$ has both $a$'s and $b$'s and so must appear in $M_m(a,b)$. But then 
since  $t\cdot [e]\nprec  M_m(a,b)$ for some choice $e\in\{a,b\}$,  we deduce $t\cdot T_{\ell}(a,b)\nprec  M$. Thus 
$$\forb(m, r,t \cdot {\cal T}_{\ell}(r))\ge \binom{r}{2}(\lfloor \frac{tm}{2}\rfloor).$$
This lower bound is about a quarter of the upper bound in
\trf{generalizetBB}. For $\ell\gg \log_2 t$, a different construction
in \srf{linear} improves this lower bound by a factor of $2$.

\section{Inductive Decomposition}\label{decomp}

Let $M$ be an $m$-rowed matrix. Some notation about repeated columns is needed.
For an $m\times 1$ column $\alpha\in\{0,1,\ldots ,r-1\}^m$, we define $\mu(\alpha,M)$ as the multiplicity of column $\alpha$ in a matrix $M$. At certain points it is important to consider matrices of bounded column multiplicity.   Define a matrix $A$ to be  $s$-\emph{simple} if  every column $\alpha$ of $A$ has $\mu(\alpha,A)\le s$. Simple matrices  are 1-\emph{simple}.  

We need induction ideas from \cite{ALu}. Define
$$\Av(m,r,{\cal F},s)=\left\{A\,:\,A\hbox{ is }m\hbox{-rowed and }s\hbox{-simple }r\hbox{-matrix}, F\nprec A\hbox{ for }F\in{\cal F}\right\},$$
with the analogous definition for $\forb(m,r,{\cal F},s)$. 
The induction proceeds with a matrix in $\Av(m,r,{\cal F},s)$ but the following observation from \cite{ALu} generalized to $r$-matrices shows that the asymptotics of $\forb(m,r,{\cal F})$ are the same as that of $\forb(m,r,{\cal F},s)$:

\begin{E}\forb(m,r,{\cal F})\le \forb(m,r,{\cal F},s)\le s\cdot\forb(m,r,{\cal F}).\label{asymptotics}\end{E}
 The second inequality follows from taking a matrix $A\in\Av(m,r,{\cal F},s)$  and forming the matrix $A'$ where $\mu(\alpha,A')= 1$ if and only if $\mu(\alpha,A)\ge 1$ so that
$\ncols{A'}\le\ncols{A}\le s\cdot \ncols{A'}$.

Let $A\in\Av(m,r,{\cal F},s)$. During the proof of \trf{main}, $s=\left(\frac{r}{2}\right)^i$ for some $i$.  Assume $\ncols{A}=\forb(m,r,{\cal F},s)$. Given a row $r$ we permute rows and columns of $A$ to obtain
\begin{E}A=\begin{matrix}\hbox{ row }r\rightarrow\\ \\ \end{matrix}
\left[\begin{matrix}0\,0\cdots\, 0&1\,1\cdots \,1&\cdots&r-1\,r-1\,\cdots r-1\\ G_0&G_1&&G_{r-1}\\\end{matrix}\right].\label{rdecomp}\end{E}
Each $G_i$ is $s$-simple. Note that typically $[G_0G_1\cdots G_{r-1}]$ is not $s$-simple so we cannot use induction directly on $[G_0G_1\cdots G_{r-1}]$.
We would like to permute the columns of $[G_0G_1\cdots G_{r-1}]$ into
the form of 
$[C_1C_1A_1]$, where $A_1$ is an $s$-simple matrix and $C_1$ is a
matrix such that for each column $\alpha$ of $C_1$ the copies
of $\alpha$ in the two copies of $C_1$ comes from different $G_i$ and $\mu(\alpha, [C_1C_1A_1])> s$.
This can be done greedily. Initially set $A_1=[G_0G_1\cdots
G_{r-1}]$. If $A_1$ is not $s$-simple, then there is a
column $\alpha$ appear in various $G_i$. Move $\alpha$ to $C_1$ and
delete two copies of $\alpha$ (in two different $G_i$) from $A_1$. When the process stops,
we get the matrix $A_1$ and $C_1$ as stated.
Note $A_1$ is $s$-simple. For each column $\alpha$ of $C_1$, the
multiplicity of $\alpha$ satisfies
$$s< \mu(\alpha, [C_1C_1A_1])\leq rs.$$

We inductively apply the same decomposition to the $(m-1)$-rowed
$s$-simple matrix $A_1$ and get a $(m-2)$-rowed $s$-simple matrix
$A_2$ and an $(m-2)$-rowed matrix $C_2$ appearing twice, and so on.

We deduce that
\begin{E}\ncols{A}\le \left(2\sum_{i=1}^{m-1}\ncols{C_i}
\right)  + rs\label{induction}\end{E}
where $rs$ is the maximum  number of columns in any 1-rowed $s$-simple matrix.
Note that  each $C_i$ is $\frac{rs}{2}$-simple. Note that it is possible to have $\ncols{C_i}=0$ in which case we ignore such cases.
The idea is to have $A$ as a root of a tree with the children $C_i$ for each $i$ with $\ncols{C_i}>0$.

\section{Proof of Theorem \ref{main}}

We denote by $R(k_1,k_2,\ldots, k_n)$ the multicolour Ramsey number for the minimum number $p$  of vertices of $K_p$ so that when the edges are coloured  using $n$ colours $\{1,2,\ldots ,n\}$ there will be a monochromatic clique of colour $i$ of size $k_i$ for some $i\in\{1,2,\ldots ,n\}$. 
It is well-known that the multicolour Ramsey number satisfyies
the  following inequality:
$$R(k_1,k_2,\ldots, k_n)\leq \left(\begin{matrix}\sum_{i=1}^n(k_i-1)\\ k_1-1,\ldots,k_n-1\\ \end{matrix}
\right)   <n^{\sum_{i=1}^n(k_i-1)}.$$
This follows from showing that the Ramsey numbers satisfy the same recurrence as the multinomial coefficients but with smaller base cases.   The proof of \trf{main}  uses the following $2r^2-r$ multicolour Ramsey number.
  
  \begin{prop}
\begin{E}\hbox{ Let }\quad u=R(\underbrace{(r-1)(\ell-1)+1,\ldots, (r-1)(\ell-1)+1}_{r \mbox{
    copies}}, \underbrace{2\ell,\ldots, 2\ell}_{2r(r-1) \mbox{ copies}}).\label{udef}\end{E}
Then a  upper bound on $u$ is:
\begin{E}u\leq (2r^2-r)^{r(r-1)(5l-3)}<r^{15r(r-1)l}.\label{ubound}\end{E}\end{prop}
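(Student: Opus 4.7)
The plan is to apply the multicolour Ramsey upper bound stated just above the proposition, namely
\[
R(k_1,k_2,\ldots,k_n) < n^{\sum_{i=1}^{n}(k_i-1)},
\]
directly to the parameters defining $u$. Here the total number of colour classes is $n=r+2r(r-1)=2r^2-r$, which already explains the base in the claimed estimate. So the first step is simply to substitute.

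Next I would compute the exponent. The sum of $k_i-1$ over the $r$ copies with $k_i=(r-1)(\ell-1)+1$ contributes $r(r-1)(\ell-1)$, and the sum over the $2r(r-1)$ copies with $k_i=2\ell$ contributes $2r(r-1)(2\ell-1)$. Adding and factoring,
\[
\sum_{i}(k_i-1) = r(r-1)\bigl[(\ell-1)+2(2\ell-1)\bigr] = r(r-1)(5\ell-3).
\]
Plugging this into the Ramsey bound yields immediately the first inequality $u\le (2r^2-r)^{r(r-1)(5\ell-3)}$.

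For the second inequality I would just make two crude simplifications. Since $r\ge 2$, one checks $2r^2-r=r(2r-1)<r^3$, so $(2r^2-r)^{r(r-1)(5\ell-3)}<r^{3r(r-1)(5\ell-3)}$. Then $3(5\ell-3)=15\ell-9<15\ell$ gives $r^{3r(r-1)(5\ell-3)}<r^{15r(r-1)\ell}$, finishing \rf{ubound}.

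There is no real obstacle here: once the correct multicolour Ramsey bound is invoked with $n=2r^2-r$ and the exponent is simplified, the estimate is a direct arithmetic computation. The only thing one must be careful about is separating the two families of parameters (the $r$ ``identity'' colours and the $2r(r-1)$ ``triangular'' colours) when summing $\sum(k_i-1)$, and then using $r\ge 2$ to absorb the factor $2r-1$ into $r^2$ to get a clean power of $r$.
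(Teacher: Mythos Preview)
Your argument is correct and is exactly the computation the paper has in mind: apply the stated multicolour Ramsey bound $R(k_1,\ldots,k_n)<n^{\sum(k_i-1)}$ with $n=r+2r(r-1)=2r^2-r$ to get the exponent $r(r-1)(5\ell-3)$, and then use $2r^2-r<r^3$ (for $r\ge 2$) together with $3(5\ell-3)<15\ell$ to obtain the final bound. The paper does not spell this out beyond stating the general Ramsey inequality, so your write-up simply fills in the intended arithmetic.
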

We highlight the fact that $u$ is bounded by a single exponential function in
$\ell$ for a fixed $r$ (independent of $s$ and $m$). 
\vskip 10pt

 We are now going to describe a tree growing procedure used in the proof. We initially start with some $A\in\Av(m,r,{\cal T}_\ell(r))$ as the root of the tree.  Given a matrix $A\in \Av(m',r,{\cal T}_\ell(r),s)$ that is a node in our tree, we apply the induction ideas of \srf{decomp} to obtain matrices $C_1, C_2, \ldots C_{m'-1}$ and we set the children of $A$ to be the matrices $C_i$ for those $i$ with $\ncols{C_i}>0$. Note that $C_i\in\Av(m'',r,{\cal T}_\ell(r),\frac{rs}{2})$.    Repeat. 
 
 \begin{lemma}Given $A\in\Av(m,r,{\cal T}_\ell(r))$,  form a tree as described above.  Then the depth of the tree is at most $\binom{r}{2}(\ell-1)+1$. \label{depth}\end{lemma}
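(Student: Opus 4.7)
The plan is to argue by contradiction: suppose some root-to-leaf path in the tree has strictly more than $\binom{r}{2}(\ell-1)+1$ vertices (equivalently, strictly more than $\binom{r}{2}(\ell-1)$ edges); I will produce an element of $\mathcal{T}_\ell(r)$ as a configuration of $A$, violating $A\in\Av(m,r,\mathcal{T}_\ell(r))$.

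Along such a path $A=B_0\to B_1\to\cdots\to B_d$, each edge $B_{i-1}\to B_i$ arises because $B_i$ is a child $C_{j_i}$ of $B_{i-1}$ produced in the iterative decomposition of \srf{decomp} at some critical row $\rho_i$ of $B_{i-1}$. Each column $\gamma$ of $B_i$ was placed there because two of its copies in $B_{i-1}$ fell in distinct groups $G_a,G_b$ under the split at $\rho_i$, so $\gamma$ carries an unordered pair label $\{a,b\}\in\binom{\{0,\ldots,r-1\}}{2}$. Fix any column $\alpha$ of the leaf $B_d$ and trace its predecessors recursively: each column at level $i$ has two predecessor columns in the parent matrix which agree on all surviving rows and take its pair's two values at $\rho_i$. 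This builds a complete binary trace tree of depth $d$ whose $2^d$ leaves are columns of $A$, pairwise distinct because two distinct chains first disagree at some level $i$ and the corresponding columns then differ at $\rho_i$, a difference preserved by later tracing.

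Pick any single root-to-leaf chain $P$ through the trace tree; its $d$ internal nodes carry $d$ pair labels from $\binom{[r]}{2}$. Since $d\ge\binom{r}{2}(\ell-1)+1$, by pigeonhole some pair $\{a,b\}$ is the label at $\ell$ nodes on $P$, at critical levels $i_1<i_2<\cdots<i_\ell$; at each such level the two subtree-children of $P$'s node correspond to the two values $a$ and $b$ at row $\rho_{i_j}$. For each $k=1,\ldots,\ell$, let $c_k$ be the leaf of the trace tree obtained from $P$ by deviating at exactly the lowest $k$ critical levels $i_1,\ldots,i_k$ (and agreeing with $P$ everywhere else). A direct check shows that on the rows $\rho_{i_1},\ldots,\rho_{i_\ell}$, the $\ell\times\ell$ matrix formed by $c_1,\ldots,c_\ell$ has strictly-lower-triangular entries equal to $a$ and diagonal entries equal to $b$; the strictly-upper-triangular entries, however, correspond to visits of chains at trace-tree nodes off the path $P$, where the pair label may differ from $\{a,b\}$, and so are not directly controlled.

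The main obstacle is handling these uncontrolled upper-triangular entries. If they all equal $b$ the submatrix is $T_\ell(a,b)$, and if they all equal $a$ it is $I_\ell(b,a)$; either way we obtain an element of $\mathcal{T}_\ell(r)$ as a configuration in $A$, the desired contradiction. For the mixed case (or when some entry is not even in $\{a,b\}$), I plan to apply a secondary pigeonhole: each uncontrolled entry is the value, at a critical row, of a column traced through an off-$P$ node whose pair is still one of the $\binom{r}{2}$ possibilities, so either re-selecting $P$ (or a sub-chain inside the off-$P$ subtree) to maximise the number of $\{a,b\}$-entries, or repeating the pigeonhole recursively in the relevant subtree, should force one of the four matrices $I_\ell(a,b), I_\ell(b,a), T_\ell(a,b), T_\ell(b,a)$ into $A$. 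Carrying out this case analysis cleanly — in particular, ensuring that the secondary reduction does not require a deeper path than already assumed — is the most delicate technical part of the proof.
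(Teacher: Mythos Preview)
Your overall plan is exactly the paper's: trace a column of the terminal node backwards along the chain $B_0\to\cdots\to B_d$, record at each step the unordered pair $\{a_i,b_i\}$ determining the two preimages, pigeonhole over the $\binom{r}{2}$ possible pairs to find one pair $\{a,b\}$ recurring at $\ell$ levels, and then exhibit a member of $\mathcal T_\ell(r)$. You have correctly isolated the one delicate point that the paper's proof does not spell out: the pair at a given level of the trace tree depends on \emph{which} column is being lifted, so once you leave the path $P$ at special level $i_j$ you cannot guarantee that the pair you meet at the earlier special level $i_{j-1}$ is still $\{a,b\}$. This is exactly why your upper-triangular entries are uncontrolled. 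The paper simply asserts that ``$\alpha$ can be extended into $2^\ell$ columns \ldots\ containing the complete $\ell\times 2^\ell$-configuration using only two colours $a$ and $b$''; your write-up is more honest about the difficulty but does not resolve it either, and your proposed ``secondary pigeonhole inside the off-$P$ subtree'' is not spelled out and, as you anticipate, is hard to close without inflating the depth bound.

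The gap is real but the repair is much simpler than what you sketch. In the greedy decomposition of \srf{decomp}, at each row one may split the overflow matrix $C_j$ further into pieces $C_j^{\{a,b\}}$ according to which pair of blocks $G_a,G_b$ the two deleted copies came from. Taking these refined pieces as the children, every node of the tree now carries a \emph{single} pair label, determined by the node rather than by the individual column. Consequently, in the trace tree every node at level $k$ has the same pair $\{a_k,b_k\}$, the pigeonhole on the chain labels is unambiguous, and at each of the $\ell$ special levels you can branch \emph{every} current column (not just the one on $P$): both values $a$ and $b$ are available regardless of where in the trace tree you sit. Iterating over the $\ell$ special levels then genuinely produces all $2^\ell$ patterns in $\{a,b\}^\ell$ on the special rows, hence $I_\ell(a,b)\prec A$, the desired contradiction. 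This sidesteps your uncontrolled-entries obstacle entirely and recovers the bound $\binom{r}{2}(\ell-1)+1$ exactly.
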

 
 \proof Suppose  there is a chain
of depth $\binom{r}{2}(\ell-1)+2$ in the tree. Pick any column vector $\alpha$ in the matrix forming the terminal node. At its parent node (or row), $\alpha$ is extended twice with some
choices $a_i, b_i$ ($a_i\ne b_i$). We label this edge (of the chosen
chain) by the colour $\{a_i,b_i\}$. Since the number of  colours (each consisting of a pair from $\{0,1,\ldots ,r-1\}$) is at most
$\binom{r}{2}$, there is some pair $\{a,b\}$
occurring at least $\ell$ times (by Pigeonhole principle). As the result, $\alpha$ can be extended
into $2^{\ell}$ columns so that the columns form a submatrix $B$ of $A$  that contains the complete 
 $\ell\times 2^{\ell}$-configuration using only two colours $a$ and
 $b$. In particular, $A$ contains $I_\ell(a,b)$ (as well as $T_\ell(a,b)$). This contradiction completes the proof.\qed

 \begin{lemma}Given $A\in\Av(m,r,{\cal T}_\ell(r))$, form a tree as described above. Then the maximum branching is at most $u$ with $u$ given in \rf{udef}.\label{ulemma} \end{lemma}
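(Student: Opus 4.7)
My plan is to argue by contradiction. Suppose the branching at some tree node $A$ exceeds $u$, so the decomposition of \srf{decomp} applied to $A$ produces non-empty residues $C_{i_1},\ldots,C_{i_{u+1}}$ arising at levels $i_1<i_2<\cdots<i_{u+1}$ with corresponding stripped rows $\rho_1,\ldots,\rho_{u+1}$. For each $k$, pick a column $\alpha_k\in C_{i_k}$ and trace it back through the greedy construction: $\alpha_k$ was created by pairing two columns of the residue $A_{i_k-1}$ that agreed on every remaining row except $\rho_k$. Hence there are two ``twin'' columns $\beta_k^+,\beta_k^-$ of $A$ that coincide on every row $\rho_p$ with $p>k$ and carry distinct values $a_k\ne b_k$ at $\rho_k$. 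For $k<p$, let $v(k,p)\in\{0,\ldots,r-1\}$ denote the common value of $\beta_k^{\pm}$ at row $\rho_p$.

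I would then colour each edge $\{k,p\}$ of $K_{u+1}$ from a palette of $2r^2-r$ colours split into two classes. In the ``neutral'' class, applied when $v(k,p)\notin\{a_p,b_p\}$, the colour is simply $v(k,p)$, giving $r$ options; in the ``boundary'' class, applied when $v(k,p)\in\{a_p,b_p\}$, the colour records the ordered pair $(v(k,p),\{a_p,b_p\})$ together with a bit distinguishing the two possible twin alignments, giving $2r(r-1)$ options. By the Ramsey bound \rf{udef}, $K_{u+1}$ then contains either a monochromatic clique of size $(r-1)(\ell-1)+1$ in some neutral colour $c$, or a monochromatic clique of size $2\ell$ in some boundary colour. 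In the neutral case, pigeonholing the clique indices across the $\le r-1$ pairs $\{a_p,b_p\}$ disjoint from $c$ yields $\ell$ levels sharing a common pair $\{a,b\}$; choosing one twin per level and using that each earlier-level twin takes value $c\notin\{a,b\}$ at every later-level row produces $T_\ell(a,b)$, or symmetrically $I_\ell(a,b)$, inside $A$. In the boundary case, the $2\ell$ twin-pairs can be matched pairwise to produce $\ell$ columns realising $I_\ell(a,b)$ on the corresponding $\ell$ rows. Either outcome contradicts $A\in\Av(m,r,\cT_\ell(r))$.

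The main obstacle is pinning down the boundary-colour convention so that the factor of $2$ inside $2r(r-1)$ genuinely encodes the twin-alignment ambiguity, and so that in the final step the matched twins assemble into a clean identity or triangular block with no off-diagonal corruption. Handling the interaction when the ``neutral'' value $c$ at a pair $(k,p)$ happens to equal $a_k$ or $b_k$ at the earlier level --- which is exactly what lets one swing between an $I_\ell$ and a $T_\ell$ outcome --- and matching the upper-versus-lower triangular orientation in the paper's definition of $\cT_\ell(r)$ consistently across the clique, are the points most likely to require care in the full proof.
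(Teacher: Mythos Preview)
Your colouring scheme encodes only the ``below--diagonal'' data $v(k,p)$ (the common value at row $\rho_p$ of the earlier twins $\beta_k^\pm$), together with the diagonal pair $\{a_p,b_p\}$ in the boundary class. It records nothing about the ``above--diagonal'' data: the values taken at row $\rho_k$ by the later twins $\beta_p^\pm$. This omission is fatal. In your neutral case, after any pigeonholing you end up with $\ell$ levels whose associated $\ell\times\ell$ submatrix has the constant $c$ strictly below the diagonal, some value from $\{a,b\}$ on the diagonal, and completely uncontrolled entries above the diagonal. Since $c\notin\{a,b\}$, this block already uses three symbols on and below the diagonal, so it cannot be any $I_\ell(a,b)$ or $T_\ell(a,b)$; and to be $I_\ell(c,\cdot)$ or $T_\ell(c,\cdot)$ you would need the above--diagonal entries to be constant, which you have not arranged. (There is also a secondary problem: the number of pairs $\{a_p,b_p\}$ disjoint from $c$ is $\binom{r-1}{2}$, not $r-1$, so for $r\ge 5$ the pigeonhole over $(r-1)(\ell-1)+1$ indices does not yield $\ell$ indices with a common pair.) The boundary case has the same defect: you control the below--diagonal value and the diagonal pair, but not the above--diagonal entries, so no amount of ``pairwise matching'' of the $2\ell$ levels produces a clean $I_\ell$ or $T_\ell$.

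The paper's proof fixes exactly this. For the edge $\{i,j\}$ with $i<j$ it records \emph{both} the below--diagonal doubled entry $(x,x)$ and the above--diagonal entry $(y_1,y_2)$ (the values at row $\rho_i$ of the two twins from level $j$), packaged as the $2\times 2$ block $\bigl(\begin{smallmatrix} y_1 & y_2\\ x & x\end{smallmatrix}\bigr)$. These are then merged into $2r^2-r$ colour classes: the $r$ constant blocks $\bigl(\begin{smallmatrix} a & a\\ a & a\end{smallmatrix}\bigr)$, and the $2r(r-1)$ classes $\bigl(\begin{smallmatrix} b & *\\ a & a\end{smallmatrix}\bigr)$ and $\bigl(\begin{smallmatrix} * & b\\ a & a\end{smallmatrix}\bigr)$ for $a\ne b$. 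A monochromatic clique then forces a uniform pattern both above and below the diagonal simultaneously, which is what lets Case~1 produce $I_\ell(b,a)$ (via one pigeonhole on the diagonal values, over $r-1$ bins) and Cases~2--3 produce $T_\ell(a,b)$ (by selecting one column of each $1\times 2$ block and deleting alternate rows/columns). The paper explicitly flags this as the ``critical idea'': a single Ramsey application must tame both triangles at once. Your obstacle paragraph correctly senses that the boundary convention is where the difficulty lies, but the missing ingredient is not a twin--alignment bit; it is the above--diagonal information.
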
 
\proof $A\in\Av(m,r,{\cal T}_\ell(r),s)$ be a node of the tree and determine the children of $A$ as described above.  Let $\left|\{i\colon C_i\ne\emptyset\}\right| \ge u$. 
Selecting one column $\c_i$ from each non-empty $C_i$ and deleting some rows
if necessary, we get the following submatrix of $A$.

\begin{E}\begin{array}{|cc|cc|cc|c}
a_1&b_1&*&*&*&*&\cdots\\
\cline{1-4}
&&a_2&b_2&*&*&\\
\cline{3-6}
&&&&a_3&b_3&\\
\cline{5-6}
&&&&&&\\
\c_1 &\c_1&\c_2&\c_2&\c_3&\c_3&\cdots\\
\end{array}\label{eq:cs}\end{E}
On the diagonal, we can assume $a_i<b_i$ for each $i$.
This is a $u\times 2u$ matrix.
We can view this as a $u\times u$ ``square'' matrix with
each entry is a $1\times 2$ row vector. 
Note that in this ``square'' matrix,
the entries below the diagonal are special $1\times 2$ row vectors
of the form $(x,x)$ while the $i$-th diagonal entries is $(a_i,b_i)$
satisfying $a_i<b_i$. There is no restriction on the 
entries above the diagonal.

Now we form a colouring of the complete graph $K_{u}$.
For each edge $ij\in E(K_{u})$ (with $i<j$), set the colour of
$ij$ to be the combination of the $(i,j)$ entry and the $(j,i)$ entry.
Write the $(i,j)$ entry on the top of $(j,i)$ entry to form a
$2\times 2$ matrix, which has the following generic form:
$\big( \begin{array}{cc}
  y_1 & y_2\\
x & x
\end{array}
\big)$.

There are at most $r^3$ such $2\times 2$ matrices and so $r^3$ colours on which to apply a 
multicolour version of the Ramsey's theorem.  We can reduce the number of colours to $2r^2-r$, and obtain a better upper bound, by
combining some patterns of $2\times 2$ matrices into one colour class
to reduce the total number of colours needed. To be precise, we define
the colour classes as
$$
\bigcup_a
\left\{\left( \begin{array}{cc}
 a & a\\
a & a
\end{array}
\right) \right\}
\cup 
\bigcup_{a\ne b}
\left\{
\left( \begin{array}{cc}
 b & *\\
a & a
\end{array}
\right)
\right\}
\cup 
\bigcup_{a\ne b}
\left\{
\left( \begin{array}{cc}
 * & b\\
a & a
\end{array}
\right)\right\}
$$
Note that the matrix $\left( \begin{array}{cc}
  b_1 & b_2\\
a & a
\end{array}
\right)$ for $b_1\ne b_2$  fits two colour classes. When this occurs, we break the tie
arbitrarily. 

A critical idea here is that we only apply Ramsey's theorem once
to get a uniform pattern for both entries below and above the diagonal!
By the definition of $u$ as a Ramsey number \rf{udef},  one of the following cases must happen.
\vskip 10pt
\noindent {\bf Case 1:}  There is a number $a\in \{0,1,\ldots, r-1\}$ such that
there is a monochromatic clique of size $(r-1)(\ell-1)+1$ using colour $\left( \begin{array}{cc}
 a & a\\
a & a
\end{array}
\right)
$.

Since the diagonals have two colours, we can pick one colour other than
$a$ and get a square matrix so that all off-diagonal entries are $a$'s
and all diagonal elements are not equal to $a$. Since this matrix has
$(r-1)(\ell-1)+1$ rows, by pigeonhole principle, there is a colour,
call it  $b$, appearing at least $\ell$ times on the diagonal. This gives a submatrix
$I_\ell(a,b)$ in $A$, contradicting $A\in\Av(m,r,{\cal T}_\ell(r),s)$. This eliminates Case 1.
\vskip 10pt

\noindent {\bf Case 2:}  There is a pair $a\ne b  \in \{0,1,\ldots, r-1\}$ such that
there is a monochromatic clique of size $2\ell$ using colour 
$\left( \begin{array}{cc}
 b & *\\
a & a
\end{array}
\right)$.

By selecting first column from each $1\times 2$ entry, we obtain
a $(2\ell \times 2\ell)$-square matrix so that the entries below the
diagonal are all $a$'s and the entry above the diagonal are all $b$'s.
The diagonal entries are arbitrary. By deleting first column,
second row, third column, fourth row, and so on, we get a submatrix
$T_\ell(a,b)$ in $A$, contradicting $A\in\Av(m,r,{\cal T}_\ell(r),s)$. This eliminates Case 2.
\vskip 10pt

\noindent {\bf Case 3:}  There is a pair $a\ne b  \in \{0,1,\ldots, r-1\}$ such that
there is a monochromatic clique of size $2\ell$ using colour 
$\left( \begin{array}{cc}
 * & b\\
a & a
\end{array}
\right)$.

This is symmetric to Case 2, and thus it can be eliminated in the same way. 

Thus such an $A$ with $u$ children does not exist. \qed

\vskip 10pt
{\bf Proof of \trf{generalizeBB}}:
We do our tree growing beginning with some \hfil\break$A\in\Av(m,r,{\cal T}_\ell(r))$. 
 We will be applying \rf{induction}. Regardless the value of $m$, at most $u$ summands in the
summation above are non-zero by \lrf{ulemma}. It is sufficient to bound each $\ncols{C_i}$.
Recall that each $C_i$ is $\frac{rs}{2}$-simple when the parent node is $s$-simple.

For $i=0,1,2,
\ldots, \binom{r}{2}(\ell-1)+1$,
let $f(i)$ be the maximum value of $\ncols{C}$ in the $i$-th depth node of matrix $C$  in the
tree above.  By convention $f(0)=\ncols{A}$.
 Inequality \eqref{induction} combined with \lrf{ulemma} implies
 the following recursive formula:
$$f(i)\leq 2u f(i+1)+r\left(\frac{r}{2}\right)^i.$$
By \lrf{depth}, we have the initial condition $f(\binom{r}{2}(\ell-1)+1)\leq 
r\cdot (\frac{r}{2})^{\binom{r}{2}(\ell-1)+1}$ by \rf{induction}, where a matrix in a node of the tree at depth $\binom{r}{2}(\ell-1)+1$ is $(\frac{r}{2})^{\binom{r}{2}(\ell-1)+1}$-simple.

Pick a common upper bound, say $r^{15r(r-1)\ell}$, for both
$2u$ and $r(\frac{r}{2})^{\binom{r}{2}(\ell-1)+1}+1$.
It implies 
$$f(i)+1\leq (f(i+1)+1)r^{15r(r-1)\ell}.$$
Thus
$$\ncols{A}< f(0)+1\leq \left(r^{15r(r-1)\ell}\right)^{
\binom{r}{2}(\ell-1)+2}
\leq r^{30\binom{r}{2}^2\ell^2}.$$

For the special case $r=2$, the upper
bound can be reduced. First, each diagonal entry of the matrix in Equation \eqref{eq:cs}
is always $[0\; 1]$. In the proof \lrf{ulemma}, Case 2 and Case 3, there is
no need to delete rows and columns alternatively. The size
of the monochromatic clique can be taken to $\ell$ instead of $2\ell$.
Thus in this setting we may take $u=R(\ell,\ell,\ell,\ell,\ell,\ell)=R_6(\ell)$ and obtain
$$|\{i\colon C_i\ne \emptyset\}|<R_6(\ell).$$
Second, all $C_i$ are simple matrices ($\frac{r}{2}=1=s$). So the recursive formula for
$f(i)$ is
$$f(i)\leq 2 R_6(\ell) f(i+1)+2$$
with the initial condition $f(\ell)\leq 2$.
It is not hard to check that  $f(\ell-1)\leq 2\ell
-1$ for $\ell\geq 2$. (Otherwise, we get a row vector consisting of
$\ell$ $0$'s or $\ell$ $1$'s, which can be used to extend into 
$T_\ell(0,1)$ or $T_\ell(1,0)$.)
 Use the bound
$R_6(\ell)<6^{6(\ell-1)}$
and solve the recursive relation for $f(i)$. We get
$$f(0)<\left(2\cdot 6^{6(\ell-1)}\right )^{\ell-1}\cdot(2\ell-1)
\leq  6^{6(\ell-1)\ell}.$$
Thus,
$$\forb(m,2,{\cal T}_\ell(2)) <6^{6(\ell-1)\ell}.$$
This implies
$$\forb(m, \{I_\ell,I^c_\ell, T_\ell\})
\leq \forb(m,2,{\cal T}_{\ell+1}(2))<6^{6\ell(\ell+1)},$$
yielding the stated bound.
\qed

\section{A non-simple forbidden family}\label{linear}

In many examples when computing $\forb(m,t\cdot F)$, the proof ideas for $\forb(m,F)$ are important.
A much weaker linear bound than \trf{generalizetBB} for $r=2$ is in
\cite{ALu}  (the constant multiplying $m$ is the constant of
\trf{BB}).   The upper bound of \trf{generalizetBB} is only off by a
factor of $2$ from the lower bound asymptotically.
 In fact,  \trf{generalizetBB} can be generalized to
 $s$-simple matrices. 

\begin{thm} \label{generalizetsBB}
Let $\ell\geq 2$, $r\geq 2$,  $t\geq 1$, and $s\geq t-1$ be given. 
Then there is a constant $c_{\ell,r,t}$ with
$$\forb(m,r,t\cdot {\cal T}_{\ell}(r),s)\leq 2r(r-1)(t-1)m
  +c_{\ell,r,t} +rs.$$
\end{thm}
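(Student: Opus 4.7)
The plan is to induct on $m$ using the one-step decomposition of Section~\ref{decomp}. Taking $A\in \Av(m,r,t\cdot {\cal T}_\ell(r),s)$ of maximum size and applying the decomposition to a chosen row gives $\|A\|\le 2\|C_1\|+\|A_1\|$, with $A_1$ an $s$-simple $(m-1)$-rowed matrix in $\Av(m-1,r,t\cdot {\cal T}_\ell(r),s)$ to which the inductive hypothesis applies directly. The task then reduces to bounding $\|C_1\|$ (where $C_1$ is $\lceil (r-1)s/2\rceil$-simple on $m-1$ rows) so that, summed over the entire decomposition chain, its contribution is $2r(r-1)(t-1)m+O_{\ell,r,t}(1)$.

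The central structural feature to exploit is a lifting property: each distinct column $\alpha$ of $C_1$ has tail-multiplicity $\mu_\alpha\ge s+1\ge t$, since $s\ge t-1$. Consequently, any copy of $I_\ell(a,b)$ or $T_\ell(a,b)$ found in $C_1$ (on any $\ell$ rows, which automatically exclude the decomposition row) lifts to $t\cdot I_\ell(a,b)$ or $t\cdot T_\ell(a,b)$ in $A$ on the same $\ell$ rows: the $\ge t$ tail extensions of each of the $\ell$ involved column patterns all agree on every non-decomposition row and so supply the $t$ required copies. Hence the simple version $C_1^*$ lies in $\Av(m-1,r,{\cal T}_\ell(r))$, and Theorem~\ref{main} yields $|C_1^*|\le 2^{c_r\ell^2}=:K_{\ell,r}$.

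To extract the tight coefficient I would classify each $\alpha\in C_1^*$ by $V_\alpha:=\{v:\mu((v,\alpha),A)\ge t\}$. In the \emph{light} case $|V_\alpha|\le 1$, the tail multiplicity is at most $s+(r-1)(t-1)$ (one heavy value contributes up to $s$, each of the other $r-1$ values contributes at most $t-1$), giving $\mu(\alpha,C_1)=\lceil(\mu_\alpha-s)/2\rceil\le \lceil(r-1)(t-1)/2\rceil$. In the \emph{heavy} case $|V_\alpha|\ge 2$, there is a pair $(a,b)$ of values each extending $\alpha$ to $\ge t$ copies, and the combined lifting across two levels of decomposition (first on the original chosen row, then on a row separating the $a$- and $b$-extensions) forces the number of such $\alpha$ to be bounded by a constant depending only on $\ell$ and $r$.

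The hard part is the coefficient bookkeeping. The light-case estimate alone only gives a per-step increment $2\|C_1\|$ of order $K_{\ell,r}\cdot(r-1)(t-1)$, which would sum across $m$ steps to the wrong coefficient. What I expect is required is a refined argument showing that in the ``steady state'' along the decomposition chain the number of \emph{popular} tails at each step is essentially at most $r$, producing per-step increment $2r(r-1)(t-1)$; the large $K_{\ell,r}$ factor then appears only at a bounded number of transitions at the top of the chain and is absorbed into $c_{\ell,r,t}$. The $+rs$ term in the final bound comes directly from the one-rowed $s$-simple residual matrix at the bottom of the decomposition chain as in \eqref{induction}.
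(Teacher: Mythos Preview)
Your setup matches the paper: you correctly observe that each column of $C_i$ has tail multiplicity at least $s+1\ge t$, so the simple support $C_i^*$ lies in $\Av(m-i,r,{\cal T}_\ell(r))$ and hence $\|C_i^*\|\le 2^{c_r\ell^2}$ by \trf{main}. But from that point on your argument diverges from the paper's and does not close. Your light/heavy dichotomy bounds $\|C_i\|$ only by $K_{\ell,r}\cdot\lceil (r-1)(t-1)/2\rceil$ plus a heavy contribution that could itself be of order $\frac{rs}{2}$ per heavy column; summed over $m$ levels this gives a leading coefficient $K_{\ell,r}(r-1)(t-1)$, not $2r(r-1)(t-1)$. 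You acknowledge this and appeal to a ``steady state'' in which only about $r$ popular tails survive per step, but no mechanism is offered that would force the number of distinct columns of $C_i^*$ down from $K_{\ell,r}$ to $O(r)$ at all but boundedly many steps. Nothing in the per-level light/heavy analysis sees the interaction between different levels, which is exactly what is needed.

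The paper does not try to bound each $\|C_i\|$ sharply. Instead it runs the full decomposition \rf{Cs} and proves a \emph{cross-level} Ramsey statement (\lrf{vbd}): setting $T=r(r-1)(t-1)+1$, there are fewer than $v$ indices $i$ with $\|C_i\|\ge T$, where $v$ is the Ramsey number \rf{vdef}. The argument selects $T$ columns from each of $v$ large $C_i$'s, forms the $v\times 2Tv$ block matrix, colours $K_v$ by $2\times 2T$ block patterns (reduced to $r^T+2Tr(r-1)$ colour classes), and finds in any monochromatic clique either $t\cdot I_\ell(b,a)$ or $t\cdot T_\ell(a,b)$. With \lrf{vbd} in hand the bound is immediate: the at most $m-1$ small $C_i$'s contribute $2(T-1)m$, the at most $v$ large $C_i$'s contribute at most $2v\cdot\frac{rs}{2}\cdot 2^{c_r\ell^2}$ (a constant in $m$), and the residual is $rs$. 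The missing idea in your proposal is precisely this bound on the number of large $C_i$'s via a Ramsey argument that looks across levels simultaneously; a purely per-level analysis cannot recover the coefficient $2r(r-1)(t-1)$.
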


This upper bound is only off by a
factor of $2$ from the lower bound. Note that we can use this bound with $s<t-1$ as noted in \rf{asymptotics}.
This yields \trf{generalizetBB}. The proof appears below.
\begin{thm}
  Let $\ell\geq 3$, $r\geq 2$,  $t\geq 1$, and $s\geq t-1$ be given. 
Then 
$$\forb(m,r,t\cdot {\cal T}_{\ell}(r),s)\geq r(r-1)(t-1)m.$$
\end{thm}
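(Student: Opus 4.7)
The plan is to construct, for each ordered pair $(a,b)\in\{0,1,\ldots,r-1\}^2$ with $a\ne b$, the $m\times(t-1)m$ block $M_{a,b}:=(t-1)\cdot I_m(a,b)$, and to take the concatenation $M$ of all $r(r-1)$ such blocks. Each column of $M_{a,b}$ has the minority value $a$ in exactly one row and the majority value $b$ in the remaining rows. For $m\ge 3$, distinct ordered pairs $(a,b)\ne(a',b')$ yield disjoint column multisets: the majority value of a column identifies $b$, the minority value identifies $a$, and the minority position identifies the specific column of $M_{a,b}$. Thus $M$ is $(t-1)$-simple with $\ncols{M}=r(r-1)(t-1)m$, and the hypothesis $s\ge t-1$ makes $M$ a candidate element of $\Av(m,r,t\cdot\cT_\ell(r),s)$.

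The main step is to verify $t\cdot F\nprec M$ for every $F\in\cT_\ell(r)$. Fix $F\in\{I_\ell(a',b'),T_\ell(a',b')\}$ and any $\ell$ rows of $M$. A column of $M$ whose restriction to these rows lies in $\{a',b'\}^\ell$ must come either from a block $M_{a,b}$ with $\{a,b\}\subseteq\{a',b'\}$, or from a block whose unique minority entry lies outside the chosen rows, in which case the restriction is the constant vector of value $b\in\{a',b'\}$. An enumeration shows that the $a'$-count of any such restricted column lies in $\{0,1,\ell-1,\ell\}$; moreover, the constant patterns (counts $0$ and $\ell$) are realized by $(r-1)(m-\ell)(t-1)$ columns of $M$ each, whereas every specific non-constant restricted pattern (count $1$ or $\ell-1$, pinned by its minority position) is realized by exactly one column of the unique block $M_{a',b'}$ or $M_{b',a'}$, and hence with multiplicity exactly $t-1$.

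I would then treat the two shapes of $F$ in turn. If $F=I_\ell(a',b')$, every column of $F$ has $a'$-count $1$, so $t\cdot F\prec M$ would demand $t$ copies of each of $\ell$ specific count-$1$ patterns, contradicting the multiplicity bound $t-1$. If $F=T_\ell(a',b')$ with $\ell\ge 4$, then $F$ contains a column of $a'$-count $2\notin\{0,1,\ell-1,\ell\}$, so $F\nprec M$ outright. If $F=T_3(a',b')$, the counts $\{0,1,2\}$ all fit inside $\{0,1,2,3\}$, but the count-$1$ and count-$2$ patterns again cap at multiplicity $t-1$ in $M$.

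The main obstacle is the $\ell=3$ case of $T_\ell$, where one must carefully argue that although the constant restrictions arise from many blocks simultaneously, every \emph{non-constant} restricted pattern is pinned to a single column of a single block, so that concatenating the $r(r-1)$ blocks cannot push the multiplicity of any non-constant pattern above $t-1$. All other cases then reduce either to this same pinning argument or to an outright weight mismatch, giving the required lower bound for all $m$ large enough relative to $\ell$, $r$ and $t$.
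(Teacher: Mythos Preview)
Your construction is exactly the one the paper uses, and your avoidance argument is correct and rests on the same key observation: any non-constant $\{a',b'\}$-pattern on the chosen $\ell$ rows is pinned to a single column of a single block $M_{a',b'}$ or $M_{b',a'}$, hence occurs with multiplicity at most $t-1$. The paper's write-up is more economical: rather than classifying all restricted columns by $a'$-count, it simply isolates one column of $t\cdot T_\ell(a',b')$ (the one with a single $a'$ and $\ell-1$ $b'$'s, which exists as soon as $\ell\ge 3$) and observes directly that this specific pattern arises only from the $t-1$ copies of a single column of $I_m(a',b')$; your split into $\ell\ge 4$ versus $\ell=3$ is therefore unnecessary, though harmless. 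Your remark that $m\ge 3$ is needed for block-disjointness (and hence $(t-1)$-simplicity) is a valid caveat that the paper glosses over; beyond that, ``$m$ large enough relative to $\ell,r,t$'' in your final line overstates the requirement.
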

\noindent{\bf Proof}:
Consider the matrix $M_m$ obtained by the concatenation of all
matrices in $(t-1)\cdot \{ I_m(a,b)\colon \hbox{ for }a,b\in\{0,1,\ldots,r-1\}, a\ne b\}$. 
The matrix $M_m$ is $(t-1)$-simple and hence  $s$-simple with
$$\ncols{M_m}=r(r-1)(t-1)m.$$
It suffices to show that $M_m\in \Av(m,r, t\cdot {\cal T}_\ell(r))$.
For a choice $a,b\in\{0,1,\ldots,r-1\}$ with $a\ne b$, we need show 
$t\cdot I_{\ell}(a,b) \not \prec M_m$ and $t\cdot
T_{\ell}(a,b)\not \prec M_m$.

Suppose not, say $t\cdot I_{\ell}(a,b) \prec M_m$. There are a list
of $\ell$-rows $i_1,i_2,\ldots, i_{\ell}$ and $t\ell$ columns
$j_1^1,\cdots, j_r^1,j_1^2,\cdots, j_r^2,\cdots, j_1^t,\cdots, j_r^t,$
evidencing the copies of $t\cdot  I_{\ell}(a,b)$ in $M_m$.
Let us restrict to the rows $i_1,i_2,\ldots, i_{\ell}$ at the moment.
For each row $i_h$, there are $t$ columns who has 
 $a$ at row $i_h$ and $b$ at other $\ell-1$ rows.
These columns can only show up in exactly one column in each
copy of $I_m(a,b)$. 
But we only have $t-1$ copies of
$T_m(a,b)$. Thus  $t\cdot I_{\ell}(a,b) \not\prec M_m$. 

Similarly suppose  $t\cdot T_{\ell}(a,b) \prec M_m$. With $\ell\geq 3$,
then there are $t$ columns of $M_m$ and $\ell$ rows $i_1,i_2,\ldots ,i_{\ell}$  containing $a$ in  row $i_1$ and containing
$b$'s in the other $\ell-1$ rows.  Each copy of $I_m(a,b)$ contains exactly
one such columns with an $a$ in row $i_1$ and all other copies of $I_m(c,d)$ do not contain
such columns of $a$'s and $b$'s. But there are only $t-1$ copies of
$I_m(a,b)$. Thus  $t\cdot T_{\ell}(a,b) \not\prec M_m$. 
\qed

\vskip 10pt
Note that in this construction of the lower bound, every column has
 equal multiplicity $t-1$. By adding $q:=\lceil log_2(t-1)\rceil$ rows  we can distinguish $t-1$ columns and  obtain a $(m+q)$-rowed simple matrix $M'\in \Av(m+q,r, t\cdot {\cal T}_{\ell-q})$. Thus we have the following corollary.

\begin{cor} 
Let $t\geq 1$, $r\geq 2$, and $\ell\geq \lceil\log_2t\rceil+3$ be given. There is a constant $c$ with  
$\forb(m,r,t\cdot {\cal T}_{\ell}(r))\geq r(r-1)(t-1)m -c.$ \qed
\end{cor}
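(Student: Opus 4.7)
The plan is to execute the construction indicated in the remark preceding the corollary and then rephrase the resulting bound in terms of the new row-count $N = m+q$. I would start from the matrix $M_m$ produced in the preceding theorem, which has $r(r-1)(t-1)m$ columns and in which every distinct column has multiplicity exactly $t-1$. Since $2^q \ge t-1$ with $q := \lceil \log_2(t-1) \rceil$, I pick $t-1$ distinct binary vectors $L_1,\ldots,L_{t-1} \in \{0,1\}^q$ and within each group of $t-1$ identical columns of $M_m$ I assign the label $L_i$ to the $i$-th copy. Appending these labels as $q$ new rows yields an $(m+q)$-rowed $r$-matrix $M'$ with $\ncols{M'} = r(r-1)(t-1)m$; it is simple because originally distinct columns remain distinguished on the top $m$ rows, while copies that coincided there are separated by different labels on the new rows.

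The main step is to show $M' \in \Av(m+q, r, t\cdot\cT_\ell(r))$. Suppose for contradiction $t\cdot F \prec M'$ for some $F \in \cT_\ell(r)$, giving $\ell$ chosen rows and $t\ell$ chosen columns of $M'$. At most $q$ of these rows lie in the extra block, so at least $k' := \ell - q$ of them lie among the original $m$ rows; the hypothesis $\ell \ge \lceil\log_2 t\rceil + 3$ together with $q \le \lceil\log_2 t\rceil$ forces $k' \ge 3$. Projecting the chosen columns to those $k'$ rows, I claim one extracts a copy of $t\cdot F'$ with $F' \in \cT_{k'}(r)$ living in $M_m$. For $F = I_\ell(a,b)$: each column of $F$ has exactly one $a$; retaining the $tk'$ columns whose $a$ lands at a kept row gives $t\cdot I_{k'}(a,b)$ on those $k'$ rows. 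For $F = T_\ell(a,b)$: the $b$-positions along the columns of $T_\ell$ form a nested chain, and nestedness is preserved under restriction, so the restricted columns realise every chain-length from $0$ up to $k'$; since each column of $T_\ell$ contributes $t$ copies to $t\cdot F$, we extract $t$ columns at each nonzero chain-length and recover $t\cdot T_{k'}(a,b)$ on the $k'$ rows after reordering. In either case $t\cdot F' \prec M_m$ with $F' \in \cT_{k'}(r)$ and $k' \ge 3$, contradicting the preceding theorem (whose argument shows $M_m$ avoids $t\cdot\cT_{k'}(r)$ for every $k' \ge 3$, verbatim).

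With $M' \in \Av(m+q, r, t\cdot\cT_\ell(r))$ established, I set $N := m+q$ and observe
$$\ncols{M'} = r(r-1)(t-1)m = r(r-1)(t-1)N - r(r-1)(t-1)q,$$
so $\forb(N,r,t\cdot\cT_\ell(r)) \ge r(r-1)(t-1)N - c$ with $c := r(r-1)(t-1)q$, as required. The main obstacle is the bookkeeping in the $T_\ell$ case: one must verify that once the $\ell - k'$ extra rows are discarded, the $t\ell$ chosen columns of $t\cdot T_\ell(a,b)$ still contain a full $t\cdot T_{k'}(a,b)$ pattern on the remaining rows, which ultimately reduces to the nested-prefix property of $T_\ell$. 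The $I_\ell$ case is essentially immediate.
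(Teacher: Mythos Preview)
Your proposal is correct and follows essentially the same approach as the paper: the paper's entire argument is the short remark preceding the corollary (add $q=\lceil\log_2(t-1)\rceil$ distinguishing rows to $M_m$ and observe that any $\ell$-rowed configuration in $M'$ must use at least $\ell-q\ge 3$ of the original rows, reducing to the preceding theorem). You have simply carried out the details the paper leaves implicit, in particular the verification for the $T_\ell(a,b)$ case via the nested-threshold structure of its columns; this is exactly the intended argument.
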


\vskip 10pt
We begin the proof of \trf{generalizetsBB}. Consider $(t-1)$-simple matrices. 
Consider some $A\in\Av(m,r, t\cdot {\cal T}_{\ell}(r))$ so that 
$A\in\Av(m,r, t\cdot {\cal T}_{\ell}(r),t-1)$ . Apply our  inductive decomposition of \srf{decomp}  with the bound \rf{induction}.

 Keep track of the matrices $C_i$  that are generated yielding the following structure in $A$:

\begin{E}\begin{array}{|c|c|c|c|c|c|c}
a^1_1\,a^1_{2}\,\cdots\,&b^1_{1}\,b^1_2\,\cdots\,&*&*&*&*&\cdots\\
\cline{1-4}
&&a^2_1\,a^2_2\,\cdots\,&b^2_1\,b^2_2\,\cdots\,&*&*&\\
\cline{3-6}
&&&&a^3_1\,a^3_2\,\cdots\,&b^3_1\,b^3_2\,\cdots\,&\\
\cline{5-6}
&&&&&&\\
C_1 &C_1&C_2&C_2&C_3&C_3&\cdots\\
\end{array}\label{Cs}\end{E}

    In what follows let 
\begin{E}T=r(r-1)(t-1)+1.\label{Tdef}\end{E}
By the construction, we may require $a^i_j< b^i_j$ for all $i,j=1,2,\ldots, m-1$.
In analogy to $u$ of \rf{udef},  let $v$ be the multicolour Ramsey number:
\begin{E}v=R(\,\underbrace{(r-1)(\ell-1)+1,\ldots, (r-1)(\ell-1)+1}_{r^T \mbox{
    copies}},\underbrace{2t\ell,\ldots, 2t\ell}_{2Tr(r-1) \mbox{ copies}}).\label{vdef}\end{E}

\begin{lemma} In the inductive structure of \rf{Cs}, we have
$\left|\{i\colon \ncols{C_i}\ge T\}\right| < v
$.\label{vbd}\end{lemma}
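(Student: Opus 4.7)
The plan is to extend the Ramsey-theoretic argument of \lrf{ulemma} by selecting multiple columns (rather than a single column) from each qualifying $C_i$, so that the resulting submatrix has enough columns to witness $t\cdot I_\ell(a,b)$ or $t\cdot T_\ell(a,b)$. Suppose for contradiction that $|\{i\colon \ncols{C_i}\ge T\}|\ge v$ and enumerate $v$ such indices as $i_1<i_2<\cdots<i_v$. For each $k$, since $\ncols{C_{i_k}}\ge T=r(r-1)(t-1)+1$, a pigeonhole on the ordered pair $(a^{i_k}_j,b^{i_k}_j)$ with $a^{i_k}_j<b^{i_k}_j$ (of which there are at most $\binom{r}{2}$ possibilities) yields $t$ columns of $C_{i_k}$ sharing a common pair $(\alpha_k,\beta_k)$. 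Under the doubling prescribed by \rf{Cs}, each of these $t$ columns appears twice in $A$, so row $r_{i_k}$ reads $\alpha_k^{t}\beta_k^{t}$ on the chosen $2t$-column block, while on any other row $r_{i_l}$ the $2t$-block is a doubled length-$t$ word.

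Next, I restrict to the $v\times 2tv$ submatrix of $A$ on rows $r_{i_1},\dots,r_{i_v}$ and these selected $2t$-column blocks. For $k<l$, the off-diagonal blocks have the shape: row $r_{i_k}$ on the $C_{i_l}$-columns is some $(u_1,\dots,u_t,u_1,\dots,u_t)$, and row $r_{i_l}$ on the $C_{i_k}$-columns is some $(w_1,\dots,w_t,w_1,\dots,w_t)$. I colour each edge $\{k,l\}$ of $K_v$ by a summary of these two patterns together with the diagonal pair, using colour classes designed by analogy with the three cases in \lrf{ulemma}, refined so that there are exactly $r^{T}$ ``Case 1'' colours (responsible for extracting a $t\cdot I_\ell(a,b)$) and $2Tr(r-1)$ ``Case 2/3'' colours (responsible for extracting a $t\cdot T_\ell(a,b)$). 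The extra factor $T$ in the latter count reflects the additional bookkeeping needed to distinguish \emph{which} of the $2t$ columns in the block carries the off-diagonal $b$-entry required for $T_\ell$.

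By the definition of $v$ in \rf{vdef}, Ramsey's theorem then provides either a monochromatic clique of size $(r-1)(\ell-1)+1$ in a Case-1 colour, or a monochromatic clique of size $2t\ell$ in a Case-2/3 colour. In the former, a secondary pigeonhole on the diagonal pair $(\alpha_k,\beta_k)$ across the clique (as in Case 1 of \lrf{ulemma}) selects $\ell$ indices whose diagonal admits a common value $b\ne a$ at $t$ of the $2t$ positions; together with the uniformly-$a$ off-diagonal blocks this realises $t\cdot I_\ell(a,b)\prec A$. In the latter, the alternate-row-deletion device of Cases 2 and 3 in \lrf{ulemma}, applied now to a $2t\ell\times 2t\ell$ array of $2t$-blocks and followed by a selection within each diagonal block of the $t$ positions with the distinguished value, yields $t\cdot T_\ell(a,b)\prec A$. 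Both outcomes contradict $A\in\Av(m,r,t\cdot {\cal T}_\ell(r),t-1)$, finishing the proof.

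The main obstacle will be the precise design of the colour classes so that their total count matches $r^{T}+2Tr(r-1)$ and each monochromatic clique of the stated size does force one of the two forbidden configurations; this is where the interaction between the doubling structure of \rf{Cs} and the $t$-fold repetition must be handled delicately, and where the specific choices $T=r(r-1)(t-1)+1$ and the Ramsey parameters in \rf{vdef} conspire to make the argument go through.
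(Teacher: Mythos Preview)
Your overall plan---build a block submatrix from the qualifying $C_i$'s, colour the edges of $K_v$ by local $2\times(\hbox{block})$ patterns, and invoke Ramsey to force either a $t\cdot I_\ell$ or a $t\cdot T_\ell$---is exactly the paper's strategy. The difficulty is that your preliminary pigeonhole, selecting only $t$ columns from each $C_{i_k}$ with a common diagonal pair $(\alpha_k,\beta_k)$, discards precisely the information that makes Case~1 work.

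Here is the failure. After your reduction the below-diagonal vector $\alpha$ in a Case-1 colour has length only $t$. To extract $t\cdot I_\ell(b,a)$ you need $t$ positions in each block where the off-diagonal value is a single symbol $a$; that forces $\alpha$ to be the constant vector $a^t$. For every non-constant $\alpha\in\{0,1,\dots,r-1\}^t$ (and there are $r^t-r$ of these), a monochromatic Case-1 clique gives a block matrix whose off-diagonal entries are $[\alpha\,\alpha]$ everywhere---symmetric in the row indices---so it contains neither $t\cdot I_\ell(b,a)$ nor $t\cdot T_\ell(a,b)$. Your sketch's phrase ``uniformly-$a$ off-diagonal blocks'' is simply not true in this situation, and no secondary pigeonhole on the diagonal pairs can repair it. Relatedly, with $2t$-wide blocks the natural colour counts are $r^t$ and $2t\,r(r-1)$, so the asserted match with the parameters $r^T$ and $2Tr(r-1)$ of \rf{vdef} cannot be realised.

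The paper avoids this by \emph{not} pre-thinning the blocks: it keeps all $T$ columns from each $C_i$, so each block has width $2T$ and the below-diagonal vector $\alpha$ has length $T=r(r-1)(t-1)+1$. After Ramsey produces a Case-1 clique, a pigeonhole on the entries of $\alpha$ now guarantees some value $a$ occurring at least $(r-1)(t-1)+1$ times; restricting to those coordinates, each diagonal block still has $(r-1)(t-1)+1$ entries, all different from $a$, so a second pigeonhole gives a value $b_i$ occurring $t$ times in row $i$, and a third pigeonhole over the $(r-1)(\ell-1)+1$ rows yields $\ell$ rows with a common $b$. That is where the specific choice of $T$ is spent. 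In short: drop the early pigeonhole on $(a^{i_k}_j,b^{i_k}_j)$, carry the full $T$-wide blocks into the Ramsey step, and do all the pigeonholing afterwards.
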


\proof
Assume  
\begin{E}|\{i\,:\,\ncols{C_i}\ge T\}|\ge v.\label{vbound}\end{E} 
In what follows we arrive at a contradiction. 
 $A$ has a structure as in \rf{Cs}. Select  rows $i$ for which $\ncols{C_i(a,b)}\ge T$ to obtain an $v$-rowed matrix as follows. For a given $i$,  select $T$ columns from $C_i(a,b)$ and  we have
$$
\left[\begin{array}{cc}a^i_{1} a^i_{2}\cdots a^i_{T} &b^i_{1}b^i_{2}\cdots b^i_{T}\\
\alpha&\alpha\\
\beta&\beta\\
\delta&\delta\\
\vdots&\vdots\\
\end{array}\right]
\hbox{ is a submatrix of }
\left[\begin{array}{cc}a^i_{1}a^i_{2}\cdots &b^i_{1}b^i_{2}\cdots \\
C_i&C_i\\
\end{array}\right],
$$
where each entry $\alpha,\beta,\ldots$ and $[a^i_1a^i_2\cdots a^i_T]$ and $[b^i_1b^i_2\cdots b^i_T]$ are $1\times T$ row vectors.
Now with $v$ choices $i$ with $\ncols{C_i}\ge T$ we obtain  a $v\times 2Tv$ $r$-matrix $X$ as follows:
$$\left[\begin{array}{@{}cc@{}|@{}cc@{}|@{}cc@{}|@{}cc@{}|c@{}}
a^{i_1}_1\cdots a^{i_1}_T&b^{i_1}_1\cdots b^{i_1}_T&*&*&*&*&*&*&\cdots\\
\alpha&\alpha&a^{i_2}_1\cdots a^{i_2}_T&b^{i_2}_1\cdots b^{i_2}_T&*&*&*&*&\cdots\\
\beta&\beta&\gamma&\gamma&a^{i_3}_1\cdots a^{i_3}_T&b^{i_3}_1\cdots b^{i_3}_T&*&*&\cdots \\
\delta&\delta&\mu&\mu&\nu&\nu&a^{i_4}_1\cdots a^{i_4}_T&b^{i_4}_1\cdots b^{i_4}_T&\\
\vdots&\vdots&\vdots&\vdots&\vdots&\vdots&&\\
\end{array}\right],$$
where the entries $\alpha,\beta,\ldots$ are $1\times T$ row vectors with entries from $\{0,1,\ldots ,r-1\}$. 

View this matrix as a $v\times v$ ``square'' matrix with each entry being a $1\times 2T$ row vector.
 All entries below the diagonal are ``doubled''
row vectors, i.e., the concatenation of two identical 
$1\times
T$ row vectors.
All diagonal entries are 
the concatenation of two $1\times T$ row vectors,
where each coordinate of the first vector is always strictly less than
the corresponding coordinate of the second vector.
There is no restriction on the entries above the diagonal.

Now we form a colouring of the complete graph $K_{v}$.
For each edge $ij\in E(K_{v})$ (with $i<j$), colour
$ij$ using the combination of the $(i,j)$ entry and the $(j,i)$ entry.
Write the $(i,j)$ entry on the top of $(j,i)$ entry to form a
$2\times 2T$ matrix, which has the following generic form:
$\left( \begin{array}{cc}
  \beta_1 & \beta_2\\
\alpha & \alpha
\end{array}
\right)$.
Here $\alpha,\beta_1,\beta_2$ are $1\times T$ row vectors.

There are at most $r^{3T}$ such matrices. Instead of applying 
Ramsey's theorem with $r^{3T}$ colours, we can reduce the total number of colours needed  by
combining some patterns of $2\times 2T$ matrices into a single colour class.

The first type of colour classes is denoted $C(\alpha)$
(with $\alpha\in \{0,1,\ldots,r-1\}^T$) and
consists of patterns
$\left( \begin{array}{cc}
 \alpha & \alpha\\
\alpha & \alpha
\end{array}
\right)$.
The second type of colour classes is denoted $C(a,b,i)$
(with $a\ne b\in  \{0,1,\ldots,r-1\}^T$ and $1\leq i\leq 2T$)
consists of patterns
$\left( \begin{array}{cc}
 \beta_1 & \beta_2\\
\alpha & \alpha
\end{array}
\right)$  whose $i$-th column is the vector 
$\left(
  \begin{array}[c]{c}
     b\\
     a\\
  \end{array}
\right)$.

A $2\times 2T$ matrix may fit multiple colour classes.
 When this occurs, we break the tie
arbitrarily. The total number of colours are 
$r^T+r^2T$ (reduced from $r^{3T}$). By the definition of $v$ and \rf{vbound}, one of the following cases must happen.
\vskip 10pt

\noindent {\bf Case 1:} There is a number $\alpha \in \{0,1,\ldots, r-1\}^T$ such that
there is a monochromatic clique of size $(r-1)(\ell-1)+1$ using colour 
$\left( \begin{array}{cc}
 \alpha & \alpha\\
\alpha & \alpha
\end{array}
\right)
$.

We get the following $((r-1)(\ell-1)+1)$-rowed submatrix:
$$\left[\begin{matrix}
**&\alpha\alpha&\alpha\alpha&\alpha\alpha&\cdots\\
\alpha\alpha&**&\alpha\alpha&\alpha\alpha&\cdots\\
\alpha\alpha&\alpha\alpha&**&\alpha\alpha&\cdots\\
\alpha\alpha&\alpha\alpha&\alpha\alpha&**\\
\vdots & \vdots &\vdots\\
\end{matrix}\right]$$

Since all the diagonal entry have two choices, we can pick one colour other than
the corresponding colour in $\alpha$.
We get the following sub-matrix:
$$\left[\begin{matrix}
\beta_1&\alpha&\alpha&\alpha&\cdots\\
\alpha&\beta_2&\alpha&\alpha&\cdots\\
\alpha&\alpha&\beta_3&\alpha&\cdots\\
\alpha&\alpha&\alpha&\beta_4&\cdots\\
\vdots & \vdots &\vdots&\vdots\\
\end{matrix}\right]$$
where the diagonal entry $\beta_i(j)\ne \alpha(j)$
for any $i$ and any $j=1,2,\ldots, T$.

Using \rf{Tdef}, the Pigeonhole principle yields a colour $a$ appearing
in  $\alpha$ at least $(r-1)(t-1)+1$ times.
By selecting these columns we get an  $((r-1)(\ell-1)+1)$-rowed submatrix
$$\left[\begin{matrix}
* \cdots *&a\cdots  a&a\cdots a&a\cdots a&\cdots\\
a\ldots a&*\cdots *&a\cdots  a&a\cdots  a&\cdots\\
a\cdots  a&a\cdots a&*\cdots  *&a\cdots  a&\cdots\\
a\cdots  a&a\cdots  a&a\cdots a &*\cdots *&\cdots\\
\vdots & \vdots &\vdots&\vdots\\
\end{matrix}\right].$$
Note that all diagonal elements (marked by $*$) are not equal to $a$. By Pigeonhole
principle, each diagonal entry has one colour $b_i\ne a$ appearing  at
least $t$ times. Since the number of row s is $(r-1)(\ell-1)+1$,
among those $b_i$'s, there is a colour $b$ appears in at least $\ell$
rows by Pigeonhole
principle. This gives a configuration $t\cdot I_{\ell}(b,a)$, contradicting $A\in\Av(m,r, t\cdot {\cal T}_{\ell}(r))$. 

\vskip 10pt
\noindent {\bf Case 2:}  There is a pair $a\ne b  \in \{0,1,\ldots, r-1\}$
and an index $i\in\{1,2,\ldots, 2T\}$
 such that
there is a monochromatic clique of size $2t\ell$ using colour $C(a,b,i)$.

By selecting $i$-th column from each $1\times 2T$ entry, we obtain
a $2t\ell \times 2t\ell$-square submatrix of $A$ so that the entry below the
diagonal are all $a$'s and the entry above the diagonal are all $b$'s.
The diagonal entries could be arbitrary. By deleting first column,
second row, third column, fourth row, and so on, we get a submatrix
$T_{t\ell}(a,b)$ of $A$ and this contains $t\cdot T_\ell(a,b)$, contradicting $A\in\Av(m,r, t\cdot {\cal T}_{\ell}(r))$. 

Both cases end in a contradiction so we may conclude  $\left|\{i\colon \ncols{C_i}\ge T\}\right| < v$. \qed

\vskip 10pt
\noindent{\bf Proof of \trf{generalizetsBB}}:\hskip .5cm 
We consider $(t-1)$-simple matrices. 
Consider some $A\in\Av(m,r, t\cdot {\cal T}_{\ell}(r))$ so that 
$A\in\Av(m,r, t\cdot {\cal T}_{\ell}(r),t-1)$. Obtain the inductive structure of \rf{Cs} with the bound \rf{induction}.

By \lrf{vbd}, $|\{i\,:\,\ncols{C_i}\ge T\}|\le v$ with $T$ given in \rf{Tdef}.
For each $i$, let $C_i'$ denote the simple matrix obtained from $C_i$ by reducing multiplicities to 1. Then 
$\ncols{C_i'}\le \forb(m-i, r, {\cal T}_\ell(r))$
since the multiplicity of each column $\alpha$ (of $C_i$)
 in $C_iC_iA_i$ is at least
$s+1\geq t$. By Theorem \ref{main}, 
 there are at most $2^{c_r\ell^2}$ distinct columns in each
$C_i$. Since $C_{i}$ is $\frac{rs}{2}$-simple, we have
$$\ncols{C_i}\leq \frac{rs}{2} \cdot 2^{c_r\ell^2}.$$
We obtain using \rf{induction}
\begin{align*}
\ncols{A}&\le 2\left(\sum_{i\,:\,\ncols{C_i}< T}\ncols{C_i}+\sum_{i\,:\,\ncols{C_i}\ge T}\ncols{C_i}\right)+rs\\
&\le 2(T-1)m +2u\cdot \frac{rs}{2} \cdot 2^{c_r\ell^2}+rs\\
&=2r(r-1)(t-1)m+c, 
\end{align*}
for a  constant $c$ depending on $r,s,\ell$ (note $s\ge t-1$). This is the desired bound, albeit with $c$ being quite large.\qed

\section{Applications}\label{applications}
We can apply our results in several ways. The following two variations for the problem of forbidden configurations are noted in \cite{survey}.

\noindent {\bf Fixed Row order for Configurations:}
There have been some investigations for cases where only column permutations of
are allowed. Note in our proof, the row order is fixed. So Theorem
\ref{main} works for this variation with the exact same
upper bound.

\noindent {\bf Forbidden submatrices:}
When both row and column orders are fixed, this is the problem of
forbidden submatrices. 
 Let $I_\ell^R(a,b)$ and $T^R_\ell(a,b)$)
 be the matrix obtained from 
$I_\ell(a,b)$ and $T_\ell(a,b)$ respectively
 by reversing the column order.
Our first observation is that
any matrix in the four family $\{I_\ell(a,b)\}_{\ell\geq 3}$,
$\{I^R_\ell(a,b)\}_{\ell\geq 3}$, $\{T_\ell(a,b)\}_{\ell\geq 3}$,
and $\{T^R_\ell(a,b)\}_{\ell\geq 3}$ cannot be the submatrix of the one
in another family. We have the following submatrix version of \trf{main}.

\begin{thm}
  For any $r\geq 2$, there is a constant $c_r$ so that for any
  $\ell\geq 2$
and any matrix with entries drawn from $\{0,1,...,r -1\}$
with at least $2^{c_r\ell^4}$ different columns must contain 
a submatrix $I_\ell(a,b)$, $T_\ell(a,b)$, $I^R_\ell(a,b)$, or
$T^R_\ell(a,b)$ for some $a\ne b\in \{0,1,...,r -1\}$.
\end{thm}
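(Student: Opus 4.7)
The plan is to reduce to the configuration version of the theorem (Theorem~\ref{main}) and then convert the configuration into a genuine submatrix via Erd\H{o}s--Szekeres arguments. First I would apply Theorem~\ref{main} with a parameter $N$ polynomial in $\ell$: if the matrix $M$ has at least $2^{c_r N^2}$ distinct columns, it contains either an $I_N(a,b)$ or a $T_N(a,b)$ configuration for some $a \neq b$. Concretely, this gives $N$ rows $r_1 < \cdots < r_N$ and $N$ columns $c_1 < \cdots < c_N$ of $M$ (in the original order) together with permutations $\sigma, \tau \in S_N$ such that reading row $\sigma(i)$ against column $\tau(j)$ realizes the target pattern.

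In the $I_N(a,b)$ case the configuration has exactly one $a$ per row and per column, so there is a permutation $\pi \in S_N$ mapping the original-order rank of each column to the original-order rank of its unique $a$-row. An increasing subsequence of $\pi$ of length $\ell$ produces an $I_\ell(a,b)$ submatrix and a decreasing one produces an $I_\ell^R(a,b)$ submatrix; Erd\H{o}s--Szekeres handles this case as soon as $N \geq (\ell-1)^2 + 1$.

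In the $T_N(a,b)$ case I would associate to each role $k \in [N]$ the 2D point $(\sigma(k), \tau(k))$ and look for $\ell$ roles $k_1 < \cdots < k_\ell$ (in role order) on which both $\sigma$ and $\tau$ are monotone. The four sign patterns $(\pm,\pm)$ then correspond, after trimming one extremal row and column in the two cases where $\sigma$ is decreasing, to submatrices of type $T_\ell(a,b)$, $T_\ell^R(a,b)$, $T_\ell(b,a)$, or $T_\ell^R(b,a)$ respectively (using the observation, as in the proof of Lemma~\ref{ulemma}, that fixing the sign pattern on an $\ell$-subset of roles forces the resulting submatrix into one of these shapes). Such a role-set can be produced by an iterated application of Erd\H{o}s--Szekeres: first extract $\sqrt{N}$ roles on which $\sigma$ is monotone in role order, then extract a further subset of $N^{1/4}$ roles on which $\tau$ is also monotone.

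The main obstacle will be this $T_N$ case, because unlike for $I_N$ the row and column orderings must align \emph{simultaneously} with the configuration's role order, while $\sigma$ and $\tau$ are a priori independent; one Erd\H{o}s--Szekeres application is no longer enough and some care is needed to ensure the monotonicity requirements on $\sigma$ and $\tau$ can be imposed together. Choosing $N = N(\ell)$ to be a sufficiently large polynomial in $\ell$ so that both the $I_N$ and the $T_N$ extractions yield length-$\ell$ pieces, and invoking Theorem~\ref{main} in the form $\forb(m,r,\cT_N(r)) \leq 2^{c_r N^2}$, then produces the claimed bound of the form $2^{c_r \ell^4}$ for a possibly enlarged constant~$c_r$.
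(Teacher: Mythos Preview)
Your overall plan---apply Theorem~\ref{main} at scale $N$, then use Erd\H{o}s--Szekeres to upgrade the configuration to a genuine submatrix---is exactly the paper's strategy. The gap is quantitative and occurs in your $T_N$ case: because you allow both a row permutation $\sigma$ and a column permutation $\tau$, you need two nested Erd\H{o}s--Szekeres passes, so you require $N^{1/4}\geq\ell$, i.e.\ $N\approx\ell^4$. Plugging this into $\forb(m,r,\cT_N(r))\le 2^{c_rN^2}$ gives $2^{c_r\ell^8}$, not the $2^{c_r\ell^4}$ you assert at the end; no constant $c_r$ depending only on $r$ can absorb that discrepancy, so as written your argument does not prove the stated bound.

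The missing observation, which the paper notes explicitly just before this theorem, is that the proof of Theorem~\ref{main} never reorders rows: the configuration in $\cT_N(r)$ is always found with the rows of $A$ in their original order, and only a column permutation is used. Thus one may take $\sigma=\mathrm{id}$, and a single Erd\H{o}s--Szekeres application to the column permutation already yields an $\ell$-term monotone subsequence. This lets you take $N=\ell^2$, whence $N^2=\ell^4$ and the claimed bound follows. With row order fixed, the $I_N$ and $T_N$ cases are handled uniformly by one monotone subsequence (your diagonal-trimming manoeuvre is then unnecessary as well), which is precisely the paper's short argument.
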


\begin{proof}
Let $c_r=30{r\choose 2}^2\log_2r$. Let $A$ be a simple $r$-matrix with $\ncols{A}> 2^{c_r\ell^4}$ and apply Theorem \ref{main}.
Then $A$ has an ${\ell}^2\times {\ell}^2$ submatrix $F$ which is a column permutation of  $I_{\ell^2}(a,b)$ or $T_{\ell^2}(a,b)$. Let the column permutation be $\sigma$. By the fundamental Erd\H{o}s-Szekeres Theorem,   any sequence of
$(\ell-1)^2+1\le {\ell}^2$
distinct numbers must contain a monotone subsequence of $\ell$
numbers. Let $i_1<i_2<\cdots<i_\ell$ be the indexes so that the subsequence
$\sigma(i_1), \sigma(i_2),\ldots, \sigma(i_\ell)$ is either increasing 
or decreasing. Consider the submatrix of $F$ obtained from $F$ by
restricting it to the $i_1, i_2, \ldots, i_\ell$ rows and 
$\sigma(i_1), \sigma(i_2),\ldots, \sigma(i_\ell)$ columns. Then
we obtained a submatrix which is one of the four matrices:
 $I_\ell(a,b)$, $T_\ell(a,b)$, $I^R_\ell(a,b)$, or
$T^R_\ell(a,b)$.
\qed
\end{proof}
\vskip 10pt






We also can obtain some interesting variants of \trf{main} by replacing some of the matrices in ${\cal T}_{\ell}(r)$. 
As noted in \cite{FS}, we must forbid at least one $(a,b)$-matrix  for
each pair $a,b\in\{0,1,\ldots ,r-1\}$ in order to have a polynomial
bound. What follows provides some additional examples of forbidden
families related to ${\cal T}_{\ell}(r)$ with interesting polynomial
bounds.  We define $\forbmax(m,r,{\cal F})=\max_{m'\le
  m}\forb(m',r,{\cal F})$. It has been conjectured that
$\forbmax(m,{\cal F})=\forb(m,{\cal F})$ for large $m$ and for many
${\cal F}$ this can be proven.  We have the following theorem.

\begin{thm}Let $r,\ell$ be given and let $\pi=P_0\cup P_1\cup
  \cdots\cup P_{t-1}$ be a partition  of $\{0,1,\ldots ,r-1\}$ into
  $t$ parts. There is a constant $c_{\ell, r}$ such that
for  any 
  family of matrices ${\cal F}_i$
 all of whose entries lie in $P_i$ ($1\leq i\leq t$) 
$$\forb(m,r,\left\{{\cal T}_{\ell,\pi}(r)\cup\bigcup_{i=0}^{t-1} {\cal F}_i\right\})
\le c_{\ell,r}\cdot \prod_{i=0}^{t-1}\forbmax(m,|P_i|,{\cal F}_i).$$
$$
\hbox{Here }{\cal T}_{\ell,\pi}(r)
=\left\{ I_{\ell}(a,b)\,:\,a\in P_i, b\in P_j,i\ne j 
  \right\}\cup \left\{ T_{\ell}(a,b)\,:\,a\in P_i, b\in P_j,i\ne j \right\}$$
  $$
  \cup \left\{ T_{\ell}(b,c)\,:\,b,c\in P_j, b\ne c \right\}.$$

\label{allbuts}\end{thm}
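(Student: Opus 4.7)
The plan is to classify the columns of $A$ by their ``type'' with respect to $\pi$ and bound two separate factors. For each column $c$ of $A$ define $\tau(c)\in\{0,1,\ldots,t-1\}^m$ by $\tau(c)_j=k$ when $c_j\in P_k$; for each type $\tau$ let $A_\tau$ be the submatrix of $A$ consisting of columns of type $\tau$, and let $T$ be the simple $t$-matrix whose columns are the distinct types appearing in $A$. Then
$$\ncols{A}\le \ncols{T}\cdot\max_\tau\ncols{A_\tau},$$
and it suffices to bound each factor.

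For the internal factor, fix a type $\tau$ and for each $k$ let $A_\tau|_k$ be the restriction of $A_\tau$ to the rows $\{j:\tau(j)=k\}$, an $|P_k|$-matrix. A column of $A_\tau$ is uniquely determined by the tuple of its restrictions, so $\ncols{A_\tau}\le\prod_k d_k$ where $d_k$ is the number of distinct columns of $A_\tau|_k$. Any $F\in{\cal F}_k$ has entries in $P_k$, so a configuration of $F$ in the simple trace of $A_\tau|_k$ would lift to $F\prec A$; hence $d_k\le\forbmax(m,|P_k|,{\cal F}_k)$ and $\ncols{A_\tau}\le\prod_{k=0}^{t-1}\forbmax(m,|P_k|,{\cal F}_k)$.

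For the external factor I would show that $T$ avoids ${\cal T}_{\ell'}(t)$ for $\ell':=r\cdot R_{r^2}(2\ell)$, so that \trf{main} applied to $t$-matrices yields $\ncols{T}\le 2^{c_t(\ell')^2}=:c_{\ell,r}$. Suppose $I_{\ell'}(i,j)\prec T$ with $i\ne j$; pulling back to $A$ and pigeon-holing the diagonal among the $\le r$ values of $P_i$ yields a sub-square of side $M_1\ge\ell'/r\ge R_{r^2}(2\ell)$ with constant diagonal $a\in P_i$ and off-diagonal entries in $P_j$. Colour each unordered pair $\{u,v\}$ with $u<v$ by $(M_{uv},M_{vu})\in P_j\times P_j$; the multicolour Ramsey theorem produces a monochromatic clique of size $2\ell$ with colour $(b_1,b_2)$. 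If $b_1=b_2=b$ we obtain $I_{2\ell}(a,b)\prec A$, an across-parts identity in ${\cal T}_{\ell,\pi}(r)$; if $b_1\ne b_2$, picking $r_i=2i-1$ and $c_i=2i$ for $1\le i\le\ell$ yields via the interleaving $r_1<c_1<\cdots<r_\ell<c_\ell$ an $\ell\times\ell$ submatrix equal to $T_\ell(b_2,b_1)$, which with $b_1,b_2\in P_j$ distinct is one of the same-part triangular configurations in ${\cal T}_{\ell,\pi}(r)$. The case $T_{\ell'}(i,j)\prec T$ is handled in parallel: pigeon-hole the diagonal (now in $P_j$) to a constant $b$, colour pairs by values in $P_j\times P_i$, and extract the monochromatic colour $(b',a)$; either $b'=b$ gives $T_{2\ell}(a,b)$ directly, or $b'\ne b$ gives $T_\ell(a,b')$ via the same interleaving, both across-parts triangles and so forbidden.

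The main obstacle is this external step, and specifically the sub-case $b_1\ne b_2$ in the $I_{\ell'}$ analysis: the same-part triangular configurations $T_\ell(b,c)$ for $b,c\in P_j$ are included in ${\cal T}_{\ell,\pi}(r)$ precisely so that the three-colour pattern produced by Ramsey still contains a forbidden submatrix, and without them the type-level statement would not propagate to a matrix-level contradiction. Multiplying the internal and external bounds then yields the desired inequality with $c_{\ell,r}=2^{c_t(\ell')^2}$.
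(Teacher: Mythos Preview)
Your proposal is correct and follows essentially the same approach as the paper: project $A$ to a $t$-matrix by replacing each entry by its part index, bound the number of distinct types via \trf{main} (after arguing that any large identity or triangle at the type level lifts, via pigeonhole on the diagonal and a $r^2$-colour Ramsey argument on off-diagonal pairs, to a forbidden configuration in ${\cal T}_{\ell,\pi}(r)$), and bound the multiplicity of each type by $\prod_k\forbmax(m,|P_k|,{\cal F}_k)$. Two minor remarks: in your $T_{\ell'}(i,j)$ case the diagonal pigeonhole and the $b'=b$/$b'\ne b$ split are unnecessary, since the interleaving of odd rows with even columns already removes the diagonal and directly yields $T_\ell(a,b')$; and your parameter $\ell'=r\cdot R_{r^2}(2\ell)$ is in fact the right size to make the $I_{\ell'}$ argument go through after the diagonal pigeonhole (the paper's stated $k=R_{r^2}(2\ell)$ omits this factor of $r$).
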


\proof Consider $A\in\Av(m,r,{\cal
  T}_{\ell,\pi}(r)\cup\bigcup_{i=0}^{t-1} {\cal F}_i)$. Now form a
$t$-matrix $A_{\pi}$ from $A$ by replacing  each entry $a$ of $A$ that
is in $P_i$ by the entry $i$. Of course $A_{\pi}$ is typically not
simple but the maximum number of different columns is finite.
Let $k= R_{r^2}(2\ell)$ and
$c_{\ell,r}=2^{c_t k^2}$ where $c_t$ is the constant
specified in Theorem \ref{main}. Otherwise  $A_{\pi}$ contains a
configuration $I_{k}(i,j)$ or $T_{k}(i,j)$ for 
some $i\ne j \in \{0,1,\ldots, t\}$. Now we return the colours $i$ and
$j$ to the original colours in $P_i$ and $P_j$ respectively. We obtain
a $k\times k$ matrix $F\prec A$ of one of the following two types.

\begin{description}
  \item [type $I_{k}(i,j)$]: All diagonal entries of $F$ are 
    in $P_i$ while all off-diagonal entries are in $P_j$.
First we apply the Pigeonhole principle on the diagonal and get
a square submatrix $F_1$ of size $k/|P_i|$ so that all diagonal 
entries have the common value, say $a$, in $P_i$. Then we apply
the multicolour Ramsey Theorem to $F_1$, where $F_1$ is viewed
as a $|P_j|^2$-colouring of the complete graph on $k/|P_i|$ vertices with edge $(x,y)$ (for $x<y$) 
 coloured $(b,c)$ if the $(x,y)$ entry of $F_1$ is $c$ and the $(y,x)$ entry of $F_1$ is $b$ .
Since $k/|P_i|>R_{|P_j|^2}(2\ell)$,
there exists a monochromatic clique of size $2\ell$ in $F_1$. Say
the colour is  $(b,c)$, where $b,c\in P_j$.  If $b=c$, we obtain $I_{2\ell}(a,b)$ and so $I_{\ell}(a,b)$, a contradiction. 
If $b\ne c$, we obtained a $2\ell\times 2\ell$ matrix with $a$'s on the diagonal, $b$'s below the diagonal and $c$'s above the diagonal. Forming the submatrix consisting of the odd indexed rows and the even indexed columns, we obtain 
$T_{\ell}(b,c)$, a contradiction.

 \item [type $T_k(i,j)$:]  All entries below diagonal of $F$ 
are in $P_i$ while the rest of entries are in $P_j$. We apply the
multicolour Ramsey Theorem to $F$ to obtain a submatrix $F_2$ of size $2\ell$
whose lower-diagonal entries has a common value $a\in P_i$ and  whose
upper-diagonal entries has a common value $b\in P_j$. There is no
restriction on the diagonal of $F_2$. We can get $T_\ell(a,b)$ from
$F_2$ by deleting the first column, the second row, and so on. Again we have a contradiction.
\end{description}

Thus the number of different columns in $A_{\pi}$ is bounded by a constant. 
Now consider $\mu(\alpha,A_{\pi})$.  If we replace just the  $i$'s in $\alpha$ by symbols chosen from $P_i$ in more
than $\forbmax(m,|P_i|,{\cal F}_i)$ ways then we get some $F\in {\cal
  F}_i$ with $F\prec A$, a contradiction. So 
  $\mu(\alpha,A_{\pi})\le
\prod_{i=0}^{t-1}\forbmax(m,|P_i|,{\cal F}_i)$. 
Combined with our bound on the number of different columns in $A_{\pi}$, we are done. \qed

\begin{remark}
The constant $c_{\ell, r}$ in Theorem \ref{allbuts} is
doubly exponential in $\ell$ in the proof above. One can reduce it
into $2^{c_r'\ell^2}$ for some constant $c'_r$ if we mimic the proof
of Theorem \ref{main} and use the Ramsey Theorem once. The details are
omitted here. 
  \end{remark}

When applying it to the partition $\{0,1\}\cup \{2\} \cup \{3\} \cup \cdots
\cup \{r\}$, we have the following theorem.

 \begin{thm}Let $r,\ell$ be given. There is a constant $c_{\ell, r}$
   so that the following statement holds.  Let ${\cal F}$ be a family
   of (0,1)-matrices. 
Then 
$$\forb(m,r,\left({\cal T}_{\ell}(r)\backslash {\cal
    T}_{\ell}(2)\right)
\cup \{T_{\ell+1}(0,1)\} \cup {\cal F})\le c_{\ell,r}\cdot \forbmax(m,s,{\cal F}).$$\label{01allbuts}\end{thm}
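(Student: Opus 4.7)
The plan is to deduce this theorem as a direct corollary of Theorem~\ref{allbuts}, applied at parameter $\ell+2$ rather than $\ell$. Specifically, I take the partition $\pi = \{0,1\} \cup \{2\} \cup \cdots \cup \{r-1\}$ (so $P_0 = \{0,1\}$ and $|P_j| = 1$ for $j \geq 1$), set $\mathcal{F}_0 = \mathcal{F}$ (which fits since $\mathcal{F}$ consists of $(0,1)$-matrices) and $\mathcal{F}_j = \emptyset$ for $j \geq 1$. Since $\forbmax(m,1,\emptyset) = 1$, Theorem~\ref{allbuts} then supplies
$$\forb(m, r, \mathcal{T}_{\ell+2, \pi}(r) \cup \mathcal{F}) \leq c_{\ell+2, r} \cdot \forbmax(m, 2, \mathcal{F}).$$
Setting $c_{\ell,r} := c_{\ell+2, r}$, it therefore suffices to establish $\Av(m, r, \mathcal{F}') \subseteq \Av(m, r, \mathcal{T}_{\ell+2, \pi}(r) \cup \mathcal{F})$, where $\mathcal{F}'$ denotes the family being forbidden in the statement. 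This in turn reduces to checking that every $G \in \mathcal{T}_{\ell+2, \pi}(r)$ contains some element of $\mathcal{F}'$ as a configuration.

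I would handle the three types of $G$ in $\mathcal{T}_{\ell+2,\pi}(r)$ in turn. If $G = I_{\ell+2}(a,b)$ with $a \in P_i$, $b \in P_j$, $i \neq j$, then the choice of $\pi$ forces $\{a,b\} \neq \{0,1\}$, so $I_\ell(a,b) \in \mathcal{T}_\ell(r) \setminus \mathcal{T}_\ell(2) \subseteq \mathcal{F}'$ and trivially $I_\ell(a,b) \prec G$. The case $G = T_{\ell+2}(a,b)$ with $a \in P_i$, $b \in P_j$, $i \neq j$ is analogous, using $T_\ell(a,b) \in \mathcal{T}_\ell(r) \setminus \mathcal{T}_\ell(2)$.

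The delicate case is $G = T_{\ell+2}(b,c)$ with $b,c \in P_j$ and $b \neq c$: as $P_0$ is the only non-singleton part of $\pi$, this forces $j = 0$ and $\{b,c\} = \{0,1\}$, so $G \in \{T_{\ell+2}(0,1), T_{\ell+2}(1,0)\}$. In the first subcase $T_{\ell+1}(0,1) \prec T_{\ell+2}(0,1)$ is trivial. In the second subcase I invoke the containment $T_{\ell-1}(a,b) \prec T_\ell(b,a)$ recorded just after the definition of ${\cal T}'_\ell(r)$; it yields $T_{\ell+1}(0,1) \prec T_{\ell+2}(1,0)$. Since $T_{\ell+1}(0,1) \in \mathcal{F}'$, the desired containment holds in both subcases, completing the reduction and hence the proof.

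The main obstacle, and the reason the theorem must include $T_{\ell+1}(0,1)$ rather than the weaker $T_\ell(0,1)$, lies entirely in this last subcase: the partition-based framework of Theorem~\ref{allbuts} forbids $T_{\ell'}(b,c)$-type patterns inside a common part, and in order to dominate the ``wrong orientation'' pattern $T_{\ell+2}(1,0)$ one needs precisely the one-step shift furnished by $T_{\ell-1}(a,b) \prec T_\ell(b,a)$. Invoking Theorem~\ref{allbuts} at level $\ell+2$ is just enough to absorb this shift while keeping $I_\ell(a,b)$ and $T_\ell(a,b)$ dominated in the other cases; level $\ell+1$ would not suffice because $T_{\ell+1}(1,0)$ has the wrong count of $0$'s and $1$'s to contain $T_{\ell+1}(0,1)$.
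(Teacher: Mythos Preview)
Your proof is correct and follows precisely the approach the paper indicates: applying \trf{allbuts} with the partition $\{0,1\}\cup\{2\}\cup\cdots\cup\{r-1\}$, $\mathcal{F}_0=\mathcal{F}$, and $\mathcal{F}_j=\emptyset$ for $j\ge 1$. The paper gives only the one-line hint before the statement and is silent on the parameter shift, whereas you correctly work out that one must invoke \trf{allbuts} at level $\ell+2$ (so that $T_{\ell+1}(0,1)\prec T_{\ell+2}(1,0)$ handles the ``wrong orientation'' triangular pattern inside $P_0$); this is exactly the bookkeeping the paper suppresses.
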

 This shows that, asymptotically at least, forbidding the configurations of 
 ${\cal T}_{\ell}(r)\backslash {\cal T}_{\ell}(2)\cup T_{\ell+1}(0,1)$ is much like restricting us to (0,1)-matrices.

\end{document}